\newcommand{\mbb}[1]{\mathbb{#1}}
\newcommand{\mc}[1]{\mathcal{#1}}
\newcommand{\hra}{\hookrightarrow}
\def\<{\left<}
\def\>{\right>}
\theoremstyle{plain}
\newtheorem{thm}{Theorem}[section]
\newtheorem{defn}[thm]{Definition}
\newtheorem{lem}[thm]{Lemma}
\newtheorem{cor}[thm]{Corollary}
\newtheorem{prop}[thm]{Proposition}
\newtheorem{example}[thm]{Example}
\newtheorem*{example*}{Example}
\newtheorem*{thm*}{Theorem}
\newtheorem*{rem*}{Remark}
\newtheorem*{lem*}{Lemma}
\newtheorem*{cor*}{Corollary}
\newtheorem*{prop*}{Proposition}
\theoremstyle{remark}
\newtheorem{rem}[thm]{Remark}
\newcommand{\ov}[1]{\overline{#1}}
\newcommand{\mo}[1]{\operatorname{#1}}
\newcommand{\up}[1]{\!\: ^{#1}\!\,}
\newcommand{\bP}{\mbb P}
\title{Corestrictions of algebras and splitting fields}
\author{Daniel Krashen}
\begin{document}

\begin{abstract}
Given a field $F$, an \'etale extension $L/F$ and an Azumaya algebra
$A/L$, one knows that there are extensions $E/F$ such that $A \otimes_F
E$ is a split algebra over $L \otimes_F E$. In this paper we bound the
degree of a minimal splitting field of this type from above and show
that our bound is sharp in certain situations, even in the case where
$L/F$ is a split extension. This gives in particular a number of
generalizations of the classical fact that when the tensor product of
two quaternion algebras is not a division algebra, the two quaternion
algebras must share a common quadratic splitting field.

In another direction, our constructions combined with results in
\cite{Kar:C2ind} also show that for any odd prime number $p$, the generic
algebra of index $p^n$, and exponent $p$ cannot be expressed
nontrivially as the corestriction of an algebra over any extension
field if $n < p^2$.
\end{abstract}

\maketitle

\section{Introduction}

It is a classical fact due to Albert that two quaternion algebras
over a field whose tensor product has index at most two must share a
common quadratic splitting field. In this paper we give generalizations
to this fact in two different directions. On the one hand, we obtain
certain generalizations of this statement for algebras of higher degree
(see corollary \ref{main_cor} and example \ref{split_example}), which
are philosophically similar to, but not intersecting with the results in
\cite{Kar:CSF}. On the other hand, we consider the idea that a pair of
algebras may be regarded as an Azumaya algebra over a split \'etale
extension of the form $F \times F$. This leads to analogous results in
the case of Azumaya algebras over more general \'etale extensions (see
theorems \ref{general_thm}, \ref{main_thm} and examples
\ref{nonsplit_quat_example}, \ref{nonsplit_example}). For example, we
derive the following analog of the above fact: given a quadratic field
extension $L/F$ and a quaternion division algebra $Q$ over $L$ whose
corestriction to $F$ has index at most $2$, then $Q$ has a splitting
field of the form $E \otimes_F L$ for some quadratic field extension
$E/L$.

Recall that a finite dimensional algebra $A$ over a field $F$ is called
separable if it is an Azumaya algebra over its center, which is a finite
dimensional \'etale extension of $F$. Setting $L = Z(A)$, we may write
$L = L_1 \times \cdots \times L_m$ as a product of separable field
extensions of $F$, and hence an \'etale algebra over $F$. Note that such
an algebra $A$ may itself be written as a product $A = A_1 \times \cdots
\times A_m$ where $A_i$ is a central simple $L_i$ algebra.

One knows very well the minimal degrees of \'etale extensions $K/L$ such
that $A \otimes_L K$ is split: such an extension may be written as $K =
K_1 \times \cdots K_m$ with $K_i$ an extension of $L_i$ and one may
always find $K$ with $[K_i : L_i] = \mo{ind} A_i$. On the other hand,
although one knows in principle that there are finite extensions $E/F$
such that $A \otimes_F E$ is a split algebra over $L \otimes_F E$, (one
may easily see this by considering extension of scalars to an algebraic
closure of $F$), it may be quite difficult to compute the minimal degree
of such an extension. In the case that $L/F$ is a split \'etale
extension, we may write $A = A_1 \times \cdots \times A_m$ and it follows
that such an $L$ is exactly a common splitting field for each of the
algebras $A_m$. Even in this split case determining a minimal degree for
$L/F$ with this property is quite delicate, and an explicit answer is
not known in general (however, see \cite{Kar:CSF} for various results in
this direction).

In this paper we give a construction of \'etale splitting fields for
separable algebras (theorem \ref{general_thm}), which we show can
provide in some sense optimal bounds on the degrees of splitting
extensions (theorem \ref{main_thm}, \ref{main_cor}, and propositions
\ref{prop_1}, \ref{prop_2}). These results generalize the classical fact
that two quaternion algebras whose product has index at most two share a
common quadratic subfield. Our main results are as follows:

\begin{thm*}[\ref{general_thm}]
Let $L/F$ an \'etale extension of dimension $m$ and and $A$ an Azumaya
algebra over $L$ of degree $(d_1, \ldots, d_m)$ (see definition
\ref{deg_def}). Let $I$ be the index of $\mo{cor}_{L/F} A$ and let $P$
be its period. Let $r$ be the remainder upon dividing $\sum d_i - m$ by
$I$. Then there exists an \'etale extension $E/F$ of degree \[\frac{(d_1
+ \cdots + d_m - m)!}{(d_1 - 1)! \cdots (d_m - 1)!} \, P^r\] such that
$A \otimes_F E$ is split as an $L \otimes_F E$ algebra.
\end{thm*}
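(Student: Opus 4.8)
The plan is to reduce the assertion to a statement about common splitting fields of central simple algebras, to realize such a splitting field as a suitable intersection inside a product of (Weil restrictions of) Severi--Brauer varieties, and to pin down the degree of that intersection by balancing two kinds of linear sections: ``cheap'' ones, coming from $F$-rational linear subspaces of the Severi--Brauer variety of $\mo{cor}_{L/F}A$, which cost nothing in the period, against ``expensive'' ones, general members of the descended polarization, each costing a factor of $P$.

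\textbf{Reduction.} Write $L = L_1\times\cdots\times L_s$ with the $L_j$ fields and $A = A_1\times\cdots\times A_s$ with $A_j/L_j$ central simple, so that $C := \mo{cor}_{L/F}A = \bigotimes_j \mo{cor}_{L_j/F}(A_j)$, with $\ind C = I$ and $\per C = P$. Put $X := \prod_{j=1}^s R_{L_j/F}\bigl(\mo{SB}_1(A_j)\bigr)$, a smooth projective geometrically connected $F$-variety with $X_{\ov F}\cong \bP^{d_1-1}\times\cdots\times\bP^{d_m-1}$; let $h_i$ be the hyperplane class of the $i$-th factor and $N := \dim X = \sum_i (d_i-1) = \sum_i d_i - m$. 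For a commutative $F$-algebra $E$ one has $A\otimes_F E$ split over $L\otimes_F E$ iff $X(E)\ne\emptyset$; in particular the residue field $\kappa(x)$ of any closed point $x\in X$ has this property, so any reduced effective $0$-cycle on $X$ assembles --- as the product of the residue fields of its support --- into an \'etale $F$-algebra $E$ with $A\otimes_F E$ split over $L\otimes_F E$, of degree equal to the degree of the cycle. Thus it suffices to produce on $X$ a reduced effective $0$-cycle of degree $\dfrac{(d_1+\cdots+d_m-m)!}{(d_1-1)!\cdots(d_m-1)!}\,P^{r}$.

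\textbf{The two kinds of cuts.} There is a norm morphism $\nu\colon X\to \mo{SB}_1(C)$ over $F$, a closed immersion, which over $\ov F$ is the iterated Segre embedding $\prod_i\bP^{d_i-1}\hookrightarrow\bP^{\delta-1}$ with $\delta := \deg C = d_1\cdots d_m$; hence $\nu^{*}$ of the hyperplane class of $\mo{SB}_1(C)$ restricts over $\ov F$ to $\sum_i h_i$. Since $\ind C=I$, the algebra $C$ has right ideals of every reduced dimension divisible by $I$, and as $I\mid\delta$ there is one of reduced dimension $\delta-I$; its projectivization is an $F$-rational linear subspace $\Lambda\subset \mo{SB}_1(C)$ of codimension $I$. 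Because $C^{\times}$ acts transitively on $\mo{SB}_1(C)$, Kleiman's transversality theorem lets us translate $\Lambda$ by a suitable $g\in C^{\times}(F)$ so that $\nu^{-1}(g\Lambda)$ is an $F$-subvariety of $X$ of dimension $N-I$ with class $(\sum_i h_i)^{I}\cap[X_{\ov F}]$ in $\mo{CH}_*(X_{\ov F})$ --- a cheap cut. Writing $N = qI + r$ with $0\le r<I$ (so $qI\le N<\delta$, and the cuts never overshoot), we perform $q$ cheap cuts in succession. For the expensive cuts: in the exact sequence $0\to\Pic X\to(\Pic X_{\ov F})^{G_F}\to\Br F$ the Galois-invariant class that equals $a_j$ along the $j$-th block descends iff $\sum_j a_j\,[\mo{cor}_{L_j/F}A_j]=0$ in $\Br F$; taking all $a_j=P$ and using $\sum_j[\mo{cor}_{L_j/F}A_j]=[C]$ gives $P[C]=0$, so the class $\sum_i Ph_i$ descends to a very ample line bundle $\M$ on $X$, whose general member over $F$ is smooth by Bertini. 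Cutting $X$ by the $q$ cheap cuts and by $r$ general members of $|\M|$, all in general position, yields (by generic smoothness, with the usual extra care when $F$ is finite) a reduced $0$-cycle $Z$ whose degree is
\[ \Bigl(\sum_i h_i\Bigr)^{qI}\Bigl(\sum_i Ph_i\Bigr)^{r}\cap[X_{\ov F}] \;=\; P^{r}\Bigl(\sum_i h_i\Bigr)^{N}\cap[X_{\ov F}] \;=\; P^{r}\,\frac{(d_1+\cdots+d_m-m)!}{(d_1-1)!\cdots(d_m-1)!}\,[\mathrm{pt}], \]
since the relations $h_i^{d_i}=0$ force the unique surviving monomial of total degree $N$ to be $\prod_i h_i^{d_i-1}$, with coefficient the multinomial $\binom{N}{d_1-1,\dots,d_m-1}$. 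The \'etale $F$-algebra produced by $Z$ then has the asserted degree.

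\textbf{The main obstacle.} The descent criterion for $\Pic$ and the intersection count are routine; the crux is the construction of the cheap cuts over the ground field. One must verify that a codimension-$I$ $F$-rational linear subspace of $\mo{SB}_1(C)$ genuinely exists --- this is exactly where $\ind C=I$ is used, through the right-ideal structure of $C$ --- that the homogeneity of $\mo{SB}_1(C)$ under $C^{\times}$ allows it to be moved into general position over $F$ relative to $X$ and to the cuts already made, and that the construction iterates with the correct bookkeeping of dimensions and cycle classes. I expect this balancing of period-free cuts against period-costly ones to be the main point: cutting $X$ by $N$ general members of $|\M|$ alone would only yield the cruder bound $P^{N}\,(d_1+\cdots+d_m-m)!/\prod_i(d_i-1)!$, and it is precisely the $q$ cheap cuts that bring the exponent of $P$ down to $r$ --- the mechanism that makes the estimate sharp in the favourable cases of Theorems~\ref{main_thm} and~\ref{main_cor} and Propositions~\ref{prop_1} and~\ref{prop_2}.
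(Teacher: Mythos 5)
Your proposal is correct and follows essentially the same route as the paper: identify the transfer $\mo{tr}_{L/F}X(A)$ (your $\prod_j R_{L_j/F}\mo{SB}_1(A_j)$) as a subvariety of $X(\mo{cor}_{L/F}A)$ via the twisted Segre embedding, cut by $qI$'s worth of $F$-rational linear sections (available since $\ind\mo{cor}_{L/F}A = I$) and by $r$ divisors of hyperplane degree $P$ (available since $\per\mo{cor}_{L/F}A = P$), and read off the degree of the resulting zero-dimensional \'etale subscheme from the Chow ring of the Segre variety. The only differences are expository: you phrase the transfer as Weil restriction, invoke Kleiman's theorem where the paper says ``general,'' and obtain the period-$P$ divisor by a Picard descent computation rather than by pulling back the degree-$P$ divisor on $X(\mo{cor}_{L/F}A)$ directly.
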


In the case of prime power degrees, one may say something about the form
of this degree:

\begin{thm*}[\ref{main_thm}]
Let $p$ be a prime number, $L/F$ an \'etale extension of degree $p^k$
and $A$ an Azumaya algebra over $L$ of constant degree $p^n$ such that
$\mo{cor}_{L/F} A$ has index dividing $p^k$. Then there exists an \'etale
extension $E/F$ of degree $p^{n(p^k - 1)}m$ where $m$ is relatively
prime to $p$ such that $A \otimes_F E$ is split as an $L \otimes_F E$
algebra.
\end{thm*}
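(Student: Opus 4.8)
The plan is to obtain this as a direct specialization of Theorem \ref{general_thm}: under the prime-power hypotheses the ``remainder term'' $P^{r}$ collapses to $1$, and what survives is a single $p$-adic valuation computation.

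First I would apply Theorem \ref{general_thm} to the \'etale extension $L/F$, which has dimension $p^{k}$, and to $A$, which has constant degree $p^{n}$; thus in the notation of that theorem there are $p^{k}$ factors and all $d_{i}=p^{n}$. Consequently $d_{1}+\cdots+d_{p^{k}}-p^{k}=p^{k}(p^{n}-1)$, and by hypothesis $I=\mo{ind}(\mo{cor}_{L/F}A)$ divides $p^{k}$, hence divides $p^{k}(p^{n}-1)$. So the remainder $r$ is $0$ and $P^{r}=1$ --- note this uses only $I\mid p^{k}$ and is insensitive to the exact values of $I$ and $P$. Theorem \ref{general_thm} then produces an \'etale extension $E/F$, with $A\otimes_{F}E$ split over $L\otimes_{F}E$, of degree
\[
N \;:=\; \frac{\bigl(p^{k}(p^{n}-1)\bigr)!}{\bigl((p^{n}-1)!\bigr)^{p^{k}}}.
\]

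It remains to verify that $v_{p}(N)=n(p^{k}-1)$, for then $N=p^{n(p^{k}-1)}m$ with $m$ prime to $p$, as claimed. I would use Legendre's formula $v_{p}(a!)=(a-s_{p}(a))/(p-1)$, where $s_{p}(a)$ is the sum of the base-$p$ digits of $a$; this gives
\[
v_{p}(N) \;=\; \frac{p^{k}\,s_{p}(p^{n}-1)-s_{p}\bigl(p^{k}(p^{n}-1)\bigr)}{p-1}.
\]
Now $p^{n}-1$ has base-$p$ expansion consisting of $n$ digits each equal to $p-1$, so $s_{p}(p^{n}-1)=n(p-1)$; and $p^{k}(p^{n}-1)$ is obtained from $p^{n}-1$ by appending $k$ zero digits, so $s_{p}(p^{k}(p^{n}-1))=n(p-1)$ as well. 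Substituting gives $v_{p}(N)=(p^{k}-1)\,n(p-1)/(p-1)=n(p^{k}-1)$. (Equivalently, Kummer's theorem identifies $v_{p}(N)$ with the number of carries produced in adding $p^{k}$ copies of $p^{n}-1$ in base $p$, which one counts directly to the same value.)

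The only substantive input is Theorem \ref{general_thm} itself, whose proof carries the real weight; in the present deduction the two points to get right are that $r=0$ in the prime-power regime (so the period term disappears) and that the resulting multinomial coefficient has $p$-adic valuation \emph{exactly} $n(p^{k}-1)$, not merely at least that. Both are routine base-$p$ digit-counting arguments once the general theorem is available, so I expect no real obstacle.
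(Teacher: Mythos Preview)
Your proposal is correct and follows essentially the same route as the paper: reduce to a zero-cycle on $\mo{tr}_{L/F}X(A)$ of degree equal to the multinomial coefficient, then compute its $p$-adic valuation to be $n(p^{k}-1)$. The only differences are cosmetic: the paper's proof of Theorem~\ref{main_thm} redoes the geometric intersection argument in place rather than citing Theorem~\ref{general_thm}, and it computes $v_{p}$ of the multinomial via its own counting lemmas (\ref{val_power}, \ref{val_misc}) rather than Legendre's digit-sum formula; your Legendre/Kummer argument is a clean equivalent.
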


One may always show that the algebra $A$ may be split by such an \'etale
extension $E/F$ with $[E:F] = p^{n p^k}$ (see proposition
\ref{a_priori_split}).  The content of the theorem \ref{main_thm} is
that after making assumptions about the index, we may in fact do better.

In the case $L/F$ is a split \'etale extension, we obtain the following
as a corollary (via lemma \ref{cor_product}):
\begin{cor*}[\ref{main_cor}]
Let $p$ be a prime number, and let $A_1, \ldots, A_{p^k}$ be central
simple algebras of degree $p^n$ over a field $F$ such that $A_1 \otimes
\cdots \otimes A_p$ has index dividing $p^k$. Then there exists an
\'etale extension $E/F$ of degree $p^{n(p^k-1)}m$ where $m$ is
relatively prime to $p$ such that $E$ is a splits each of the algebras
$A_1, \ldots, A_p$.
\end{cor*}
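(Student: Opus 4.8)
The plan is to deduce this directly from Theorem \ref{main_thm} by regarding the tuple $(A_1, \ldots, A_{p^k})$ as a single Azumaya algebra over the \emph{split} \'etale extension $L = F \times \cdots \times F$ ($p^k$ factors), which has degree $p^k$ over $F$. First I would set $A = A_1 \times \cdots \times A_{p^k}$; this is an Azumaya algebra over $L$, and since each $A_i$ has degree $p^n$ over $F$, the algebra $A$ has constant degree $p^n$ in the sense of Definition \ref{deg_def}.

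Next I would identify the corestriction. By Lemma \ref{cor_product}, $\mo{cor}_{L/F} A$ is Brauer equivalent to the tensor product $A_1 \otimes_F \cdots \otimes_F A_{p^k}$, so the hypothesis that this tensor product has index dividing $p^k$ is precisely the statement that $\mo{cor}_{L/F} A$ has index dividing $p^k = [L:F]$. Theorem \ref{main_thm} then applies verbatim and yields an \'etale extension $E/F$ of degree $p^{n(p^k - 1)}m$, with $m$ relatively prime to $p$, such that $A \otimes_F E$ is split as an algebra over $L \otimes_F E$.

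Finally I would unwind the conclusion. Since $L \otimes_F E \cong E \times \cdots \times E$ and, compatibly with this decomposition, $A \otimes_F E \cong (A_1 \otimes_F E) \times \cdots \times (A_{p^k} \otimes_F E)$, the assertion that $A \otimes_F E$ is split over $L \otimes_F E$ is equivalent to the assertion that each $A_i \otimes_F E$ is split over $E$, i.e. that $E$ splits every $A_i$. This is exactly the claim of the corollary.

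I expect no genuine obstacle here beyond bookkeeping: the whole argument is the dictionary between \'etale $F$-algebras that happen to be split and finite families of central simple $F$-algebras, together with Lemma \ref{cor_product} which says corestriction along a split extension computes the tensor product. The only points requiring a moment's care are verifying that the numerical hypothesis of Theorem \ref{main_thm} (index of the corestriction dividing $[L:F]$) matches the hypothesis here once $[L:F] = p^k$, and that the degree $p^{n(p^k-1)}m$ is reproduced with the same exponent of $p$; both are immediate from the dictionary above.
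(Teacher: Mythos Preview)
Your argument is correct and is exactly the approach the paper intends: the corollary is stated immediately after Theorem~\ref{main_thm} as its specialization to the split \'etale extension $L = F^{p^k}$, with Lemma~\ref{cor_product} supplying the identification $\mo{cor}_{L/F}(A_1 \times \cdots \times A_{p^k}) = A_1 \otimes \cdots \otimes A_{p^k}$. The only minor imprecision is that Lemma~\ref{cor_product} actually gives an equality (not merely a Brauer equivalence), but since the hypothesis concerns only the index this makes no difference.
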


\begin{rem*} \label{what_is_m}
It follows from the proof of the theorem that the integer $m$ appearing
in the statements above may be explicitly expressed as:
\[m = \frac{(p^k(p^n - 1))!}{((p^n - 1)!)^{p^k} p^{n(p^k - 1)}}\]
\end{rem*}

The classical case of quaternion algebras corresponds to $p = 2, n = 1,
k = 1$ with a split quadratic \'etale extension. To get a feeling for this
result, we provide a few examples for small values:

\begin{example}[$k = n = 1$, $p = 2$] \label{nonsplit_quat_example}
Suppose $L/F$ is a separable quadratic field extension, and $Q$ is a
quaternion algebra over $L$ such that $\mo{cor}_{L/F} Q$ is not
division. Then there exists a quadratic field extension $E/F$ such that
$Q \otimes_F E$ is split over $L \otimes_F E$.
\end{example}
This particular example is already known and may be proved using
quadratic form theory, as was pointed out to the author by A. Merkurjev.
In particular, one may see this by considering the corestriction of the
pure part of the $2$-fold Pfister form associated to $Q$ with respect to
a linear map $L \to F$ taking $1$ to $0$. Since the result must be isotropic
by assumption on the index, the original form must represent an element
of the ground field. This implies in the symbol $(a, b)$ defining $Q$,
we may take one of the elements, say $a$ to lie in the ground field,
providing us with the splitting field $F(\sqrt{a})/F$.

\begin{example}[$k = n = 1$, nonsplit case] \label{nonsplit_example}
let $p$ be a prime integer. Suppose $L/F$ is a degree $p$ field
extension, and $A$ is a degree $p$ central simple $L$-algebra such that
$\mo{cor}_{L/F} A$ has index dividing $p$. Then there exists an \'etale
extension $E/F$ of degree $p^{p-1}m$ for some $m$ relatively prime to to
$p$ such that $A \otimes_F E$ is split. 
\end{example}

\begin{example}[$k = n = 1$, split case]\label{split_example}
Suppose $A_1, \ldots, A_p$ are algebras over a field $F$ of prime degree
$p$, such that $\mo{ind} A_1 \otimes \cdots \otimes A_p$ divides $p$.
Then there exists an \'etale extension $E/F$ of degree $p^{p-1}m$ for
some $m$ relatively prime to to $p$ such that $A \otimes_F E$ is split.
In particular, for $p = 3$ we find that $A$ has a degree $90 = 9 \cdot
10$ splitting field.
\end{example}

In the split case $n = k = 1$ above, this result can be seen to be sharp
in the sense of the following propositions:

\begin{prop*}[\ref{prop_1}]
There exists a field $F$ and central simple $F$-algebras $A_1, \ldots,
A_p$ such that $\mo{ind}(A_1 \otimes \cdots \otimes A_p) = p^2$, and every
field extension $E/F$ which splits each algebras $A_i$ has $p^p |
[E:F]$.
\end{prop*}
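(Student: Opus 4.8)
The plan is to exhibit a sufficiently generic example and then to bound the degree of every common splitting field from below using the incompressibility of a product of Severi--Brauer varieties.

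For the example one works over a field $k$ containing a primitive $p$-th root of unity and takes $F$ to be the function field of a parameter variety carrying a ``versal'' $p$-tuple $A_1,\dots,A_p$ of degree-$p$ central simple algebras whose product $A_1\otimes\cdots\otimes A_p$ is forced to have index $p^2$: for instance one fixes a division $k$-algebra $D$ of degree $p^2$ and exponent $p$, lets $F$ parametrize generic ways of writing $[D]$ as $[A_1]+\cdots+[A_p]$ with each $A_i$ of degree $p$, and checks that $D$ still has index $p^2$ over $F$, whence $\mo{ind}(A_1\otimes\cdots\otimes A_p)=p^2$. (For $p=2$ it is enough to take $A_1\otimes A_2$ a generic biquaternion algebra, and the argument below collapses to the classical one via Albert's theorem; for $p=3$ one may take $A_1=(x_1,y_1)$ and $A_2=(x_2,y_2)$ generic degree-$3$ symbol algebras and $A_3=(x_1x_2,v)$ with $v$ a further independent parameter.)

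Now let $E/F$ split each $A_i$. Then the product of Severi--Brauer varieties $X:=\mo{SB}(A_1)\times_F\cdots\times_F\mo{SB}(A_p)$ acquires an $E$-point, so $[E:F]$ lies in the image of the degree homomorphism $\deg\colon\mo{CH}_0(X)\to\Z$; it therefore suffices to show that every closed point of $X$ has degree divisible by $p^p$, i.e. that the torsion index of $X$ equals $p^p$ (equivalently, since $\dim X=\sum_i(\mo{ind}(A_i)-1)=p(p-1)$ and $X$ is a product of Severi--Brauer varieties of prime-degree algebras, that $X$ is $p$-incompressible). Here I would invoke Karpenko's results on the canonical dimension and torsion index of products of (generalized) Severi--Brauer varieties: granting that the $A_i$ are generic enough -- concretely, that for each $i$ the algebra $A_i$ remains a division algebra over the function field of $\prod_{j\ne i}\mo{SB}(A_j)$, together with the analogous non-splitting statements over all intermediate flag varieties -- Karpenko's incompressibility criterion yields torsion index exactly $p^p$, which is what the reduction needs. (A Sylow reduction, replacing $E$ by the compositum $E\cdot\widetilde E^{\,P}$ where $\widetilde E/F$ is the Galois closure of $E/F$ and $P$ a Sylow $p$-subgroup of $\mo{Gal}(\widetilde E/F)$, lets one instead argue directly with $p$-power-degree common splitting fields if one wishes to avoid $\mo{CH}_0$.)

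The main obstacle is to meet the two demands on the example simultaneously: the classes $[A_i]$ must satisfy a genuine relation in $\Br(F)$ in order to pin $\mo{ind}(A_1\otimes\cdots\otimes A_p)$ down to $p^2\ll p^p$, while the $A_i$ must nonetheless be generic enough that no sub-collection of the $\mo{SB}(A_i)$ acquires a small splitting field -- in particular the $A_i$ must share no common maximal subfield, which is precisely the degeneracy that would collapse the torsion index below $p^p$. I expect to resolve this by a specialization argument on the parameter variety, reducing the needed non-splitting assertions to a controlled computation in $\Br(F)[p]$ at the generic point and then feeding the conclusion into Karpenko's incompressibility theorem.
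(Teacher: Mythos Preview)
Your framing is right---degrees of common splitting fields are exactly degrees of closed points on $\prod_i X(A_i)$---but both the construction and the lower bound are left as plans, and in fact your proposed $p=3$ example fails outright: with $A_1=(x_1,y_1)$, $A_2=(x_2,y_2)$, $A_3=(x_1x_2,v)$ over $F=k(x_1,y_1,x_2,y_2,v)$, the field $E=F(x_1^{1/3},x_2^{1/3})$ has degree $9$ over $F$ and splits all three algebras (since $x_1$, $x_2$, and hence $x_1x_2$ become cubes), so $p^p=27\nmid[E:F]$. The failure is structural: in your example every product $A_1^{c_1}\otimes A_2^{c_2}\otimes A_3^{c_3}$ collapses to a sum of at most two symbols, so no Brauer class of index $p^p$ is available to witness the bound, and the product of Severi--Brauer varieties is genuinely compressible. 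Your general construction (``parametrize ways of writing $[D]$ as $[A_1]+\cdots+[A_p]$'') inherits the same danger and is not made precise enough to rule it out.

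The paper's construction avoids this by starting over a field $L$ with degree-$p$ algebras $A_1,\dots,A_p$ chosen so that \emph{every} product $A_1^{i_1}\otimes\cdots\otimes A_p^{i_p}$ with all $i_k$ prime to $p$ is a division algebra of index $p^p$ (this is Lemma~\ref{example_lem}, quoted from \cite{Kar:CSF}). One then takes $F$ to be the function field of the generalized Severi--Brauer variety $X_{p^2}(A)$ of $A=A_1\otimes\cdots\otimes A_p$, which forces $\mo{ind}(A_F)=p^2$.

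For the lower bound the paper does not invoke incompressibility of $\prod X(A_i)$ at all. Instead it observes that any common splitting field $E$ of the $A_i$ must also split the single auxiliary algebra
\[A'=A_1\otimes A_2\otimes A_3^{2}\otimes A_4^{3}\otimes\cdots\otimes A_p^{\,p-1},\]
so it suffices to show $\mo{ind}(A'_F)=p^p$. This is a direct application of Blanchet's index reduction formula for the extension $F/L$: one has $\mo{ind}(A'_F)=\gcd_i\bigl(\tfrac{p^2}{\gcd(p^2,i)}\,\mo{ind}_L(A'\otimes A^i)\bigr)$, and the staggered exponents in $A'$ are chosen precisely so that each term in this gcd is at least $p^p$. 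The incompressibility machinery you wanted to invoke is thus replaced by an elementary and fully explicit Brauer-group computation.
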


\begin{prop*}[\ref{prop_2}]
Let $p$ be a prime number, and choose positive integers $d, n$ with $d <
n < p$. Then there exists a field $F$ and central simple $F$-algebras
$A_1, \ldots, A_n$ of degree $p$ such that $\mo{ind}(A_1 \otimes \cdots
\otimes A_n) = p^d$ and every field extension $E/F$ which splits each
algebra $A_i$ has $p^n | [E:F]$.
\end{prop*}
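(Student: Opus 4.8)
The plan is to build the field $F$ and the algebras $A_i$ by a generic construction, starting from a versal situation in which one has $n$ degree-$p$ algebras whose tensor product has index exactly $p^d$, and then to detect the obstruction to splitting by all $A_i$ simultaneously via an index/valuation computation. Concretely, I would start over a base field $k$ (say $k = \mathbb{C}$, or any field containing enough roots of unity), form the function field of a product of Severi--Brauer-type varieties carrying the universal $p$-torsion classes $\alpha_1, \dots, \alpha_n \in \Br(F)[p]$ subject to the single relation that $\alpha_1 + \cdots + \alpha_n$ has index dividing $p^d$ — for instance by taking $F$ to be the generic splitting field (in the sense of the function field of a generalized Severi--Brauer variety $\mathrm{SB}_{p^d}(A_1 \otimes \cdots \otimes A_n)$) of the product of the generic degree-$p$ algebras. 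One then needs to verify that over this $F$ the classes $\alpha_i$ remain linearly independent in $\Br(F)[p] \cong (\Z/p)^n$ (equivalently that no $A_i$, and no proper "sub-sum", has dropped in index), which is where index-reduction theorems for function fields of Severi--Brauer varieties (as in the work of Schofield--van den Bergh, or Karpenko's index reduction formulas) enter.

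The heart of the argument is then the lower bound: if $E/F$ splits every $A_i$, then $E$ splits the whole subgroup $\langle \alpha_1, \dots, \alpha_n \rangle \cong (\Z/p)^n \subset \Br(F)$, and I want to conclude $p^n \mid [E:F]$. I would reduce to the case $E/F$ separable (a purely inseparable extension does not kill $p$-torsion Brauer classes when $\mathrm{char} \neq p$, and in characteristic $p$ one handles things separately or simply assumes $\mathrm{char}\,F \neq p$ from the construction), and then to the case $E/F$ Galois by passing to a Galois closure — but carefully, since Galois closure can only increase the degree, it suffices to bound the Galois closure degree from below by $p^n$... actually the cleaner route is: a minimal splitting field of the $p$-group $(\Z/p)^n$ has degree at least $p^n$ because each generator $\alpha_i$ needs an independent "$\Z/p$'s worth" of splitting. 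To make this precise I would use the index of the diagonal algebra: the key numerical fact, which one proves by a residue/valuation argument in a suitable construction, is that $\mathrm{ind}(A_1 \otimes A_2 \otimes \cdots \otimes A_n) $ restricted appropriately, or rather the index of the algebra $A_1 \otimes \cdots \otimes A_i$ over a tower, forces $[E:F]$ to accumulate a factor of $p$ for each $i$. The cleanest incarnation: take $F$ built iteratively so that $A_i$ has index $p$ even over any field splitting $A_1, \dots, A_{i-1}$ of degree prime-to-... — no, rather so that over $F_i := $ (a minimal splitting field of $A_1, \dots, A_{i-1}$) the algebra $A_i \otimes F_i$ still has index $p$; then any field splitting all of them has degree divisible by $p$ over $F_i$, and inductively by $p^n$ over $F$.

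The main obstacle, as I see it, is controlling the index of partial tensor products $A_1 \otimes \cdots \otimes A_i$ over the various splitting fields in the tower while simultaneously maintaining the global constraint $\mathrm{ind}(A_1 \otimes \cdots \otimes A_n) = p^d$. The constraint that the full product have index exactly $p^d$ (not $p^n$) is exactly what makes the construction delicate — one is imposing one nontrivial relation among the $\alpha_i$ in the Brauer group while needing all other subgroup structure to survive — and the hypothesis $d < n < p$ is surely what is needed to make the index-reduction bookkeeping work out (the condition $n < p$ ensures, via results like those of Karpenko on the Chow groups / Grothendieck group of Severi--Brauer varieties, that no unexpected index drops occur when $n$ independent degree-$p$ classes interact, and $d < n$ is what leaves "room" for the single relation). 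I expect the proof to proceed by: (1) set up the generic/versal $F$ with the index-$p^d$ relation imposed; (2) invoke an index-reduction formula to show all relevant partial products retain maximal index subject to the relation, in particular each $A_i$ has index $p$ and the subgroup is $(\Z/p)^n$; (3) run the tower induction above to force $p^n \mid [E:F]$ for any common splitting field $E$. I would lean on \cite{Kar:C2ind} and standard index-reduction machinery for step (2), which is the technical core.
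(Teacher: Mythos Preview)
Your setup matches the paper almost exactly: start with $n$ ``generic'' degree-$p$ algebras $A_1,\dots,A_n$ over some field $L$ (the paper uses Lemma~\ref{example_lem}, due to Karpenko, guaranteeing that every product $A_1^{i_1}\otimes\cdots\otimes A_n^{i_n}$ with all $i_k$ prime to $p$ is division), and then take $F$ to be the function field of the generalized Severi--Brauer variety $X_{p^d}(A)$ for $A=A_1\otimes\cdots\otimes A_n$, so that $\operatorname{ind}(A\otimes F)=p^d$. So the construction side of your proposal is fine.

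The gap is in your lower-bound argument. The tower induction you sketch --- ``over a minimal splitting field $F_i$ of $A_1,\dots,A_{i-1}$ the algebra $A_i$ still has index $p$, hence any common splitting field picks up another factor of $p$'' --- does not close as stated: knowing $A_i$ is nonsplit over \emph{some} minimal $F_i$ says nothing about an arbitrary common splitting field $E$, and the statement you would actually need (that $A_i\otimes E$ has index $p$ for \emph{every} $E$ splitting $A_1,\dots,A_{i-1}$) is false. The paper avoids this entirely with a single, concrete trick you are missing: it introduces the auxiliary algebra
\[
A' \;=\; A_1 \otimes A_2^{\,2} \otimes A_3^{\,3} \otimes \cdots \otimes A_n^{\,n},
\]
observes that any common splitting field of the $A_i$ must split $A'$, and then shows directly via Blanchet's index reduction formula that $\operatorname{ind}(A'\otimes F)=p^n$. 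The hypothesis $n<p$ is used exactly here: it makes the exponents $1,2,\dots,n$ all prime to $p$, and for each twist $A'\otimes A^i$ the new exponents $1+i,\dots,n+i$ are $n$ consecutive integers, so (again since $n<p$) at most one is divisible by $p$. Thus each term in the index reduction gcd is at least $p^n$, and one term (with $p\mid i$) equals $p^n$. This single index computation replaces your tower and is what you should aim for.
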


In \cite{Kar:C2ind}, Karpenko shows that the generic division algebra of
index $p^n$ and period $p$ is indecomposable for any odd prime $p$. 
Since an algebra is decomposable if and only if it is a corestriction
with respect to a split \'etale extension (see lemma \ref{cor_product}),
it makes sense to generalize this result to try to show the generic
division algebra is not a corestriction for any \'etale extension:
\begin{thm*}[\ref{indecomp}]
Let $A$ be generic division algebra of degree $p^n$ and period $p$, and
let $F$ be the center of $A$. If $n < p^2$ then $A$ cannot be written as
$cor_{L/F} B$ for any \'etale extension $L/F$ and any Azumaya algebra
$B$ over $L$.
\end{thm*}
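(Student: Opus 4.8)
The plan is to argue by contradiction. Suppose $A = \mo{cor}_{L/F} B$ for a nontrivial \'etale extension $L/F$ and an Azumaya algebra $B$ over $L$. I would first whittle this down, using Karpenko's indecomposability theorem from \cite{Kar:C2ind} together with a Sylow--subgroup manipulation, to the case of a corestriction from a \emph{cyclic degree $p$} extension, where Theorem \ref{general_thm} (equivalently Theorem \ref{main_thm}) produces a completely explicit \'etale splitting field of $A$. That splitting field will then be too small, when $n < p^2$, to be compatible with the lower bounds on splitting fields of the generic algebra that follow from \cite{Kar:C2ind}, giving the contradiction.

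For the reduction: writing $L = L_1 \times \cdots \times L_m$ and using Lemma \ref{cor_product} to get $\mo{cor}_{L/F} B \cong \bigotimes_i \mo{cor}_{L_i/F} B_i$, the indecomposability of $A$ forces all but one tensor factor to be split, so $A = \mo{cor}_{L'/F} B'$ for a single field extension $L' = L_i$; moreover $p \mid [L':F]$, since otherwise $\mo{cor}_{L'/F}\mo{res}_{L'/F} A = [L':F]\cdot[A]$ is a unit multiple of $[A]$ and the expression is trivial. Now let $M/F$ be the Galois closure of $L'/F$, $G = \mo{Gal}(M/F)$, $H = \mo{Gal}(M/L')$, let $P \leq G$ be a Sylow $p$-subgroup, and set $F' = M^P$. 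Then $[F':F]$ is prime to $p$, so $A_{F'}$ is still a division algebra of index $p^n$ and exponent $p$, and it still carries the cycle-theoretic obstruction to decomposability of \cite{Kar:C2ind}, that obstruction being invariant under prime-to-$p$ base change via $\mo{cor}\circ\mo{res} = \times[F':F]$ on torsion. The components of $L' \otimes_F F'$ correspond to the $P$-orbits on $G/H$, all of $p$-power size: if $P$ acts non-transitively then $A_{F'}$ decomposes as a nontrivial tensor product, contradicting indecomposability; hence $P$ is transitive, $[L' \otimes_F F' : F'] = [L':F]$ is a power of $p$, and the Galois closure of $L' \otimes_F F'$ over $F'$ has a $p$-group as Galois group. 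Since every proper subgroup of a $p$-group lies in one of index $p$, $L' \otimes_F F'$ contains a cyclic degree $p$ subextension $L_0$ of $F'$, and splitting the corestriction in two steps gives $A_{F'} = \mo{cor}_{L_0/F'}(B_0)$ with $[L_0:F']=p$, where $B_0 = \mo{cor}_{(L'\otimes F')/L_0}(B'\otimes F')$ may be taken to be division of degree $p^{n_0}$, $n_0 \geq 1$. Renaming $F'$ to $F$, we are in the setting of Theorem \ref{main_thm} with $k=1$.

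Applying Theorem \ref{general_thm} to $\mo{cor}_{L_0/F}(B_0)$ (with $m=p$ geometric components each of degree $p^{n_0}$, index $p^n$, period $p$) yields an \'etale extension $E/F$ of degree
\[
\frac{\bigl(p(p^{n_0}-1)\bigr)!}{\bigl((p^{n_0}-1)!\bigr)^{p}}\; p^{r}, \qquad r \equiv p(p^{n_0}-1) \pmod{p^{n}},
\]
splitting $B_0$ over $L_0\otimes_F E$ and hence splitting $A$ over $F$; a Kummer/carry count gives $v_p([E:F]) = n_0(p-1)+r$. Meanwhile $n_0$ is constrained: $\deg_F\mo{cor}_{L_0/F}(B_0)=p^{n_0 p}$ gives $n \le n_0 p$, and $\mo{res}_{L_0/F}A = \bigotimes_{j=0}^{p-1}{}^{\sigma^j}B_0$ with $\mo{ind}(\mo{res}_{L_0/F}A)\ge p^{n-1}$ bounds $n_0$ from the other side. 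Comparing $[E:F]$ with the lower bound on $v_p$ of an arbitrary splitting field of $UD(p^n,p)$ coming from \cite{Kar:C2ind} (nonvanishing torsion in $\mo{CH}^2$ of the Severi--Brauer variety, and the incompressibility it entails) produces the contradiction, and one verifies the two estimates clash for every admissible $n_0$ exactly when $n < p^2$. The main obstacle is precisely this last confrontation: extracting from Karpenko's cycle-theoretic results a usable lower bound for $v_p$ of a splitting field and checking that it strictly exceeds $n_0(p-1)+r$ over the full admissible range of $n_0$ if and only if $n<p^2$ --- this is where the threshold $p^2$ is born. A secondary, bookkeeping-style difficulty is pinning down the meaning of ``nontrivial'' and checking that it, and the generic-algebra hypotheses, survive each reduction (isolating a field factor, the prime-to-$p$ base change to $M^P$, and the passage to $L_0$).
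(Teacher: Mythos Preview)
Your endgame does not produce a contradiction. You propose to build an \'etale splitting field $E/F$ for $A$ via Theorem~\ref{general_thm} and then compare $v_p([E:F])$ against ``the lower bound on $v_p$ of an arbitrary splitting field of $UD(p^n,p)$ coming from \cite{Kar:C2ind}.'' But the only obstruction to being a splitting field of $A$ is the index: every \'etale $E$ with $A_E$ split has $p^n\mid[E:F]$, and conversely such $E$ exist with $v_p([E:F])=n$ exactly (take a maximal separable subfield of $A$). Karpenko's results give no sharper bound on $0$-cycles than this. Indeed, in your own computation, with $n_0\le n/p$ one gets $r=p^{n_0+1}-p$ and hence $v_p([E:F])=n_0(p-1)+p^{n_0+1}-p\ge p^2-1\ge n$, so the constructed $E$ comfortably clears the index bound and nothing is contradicted. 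Neither incompressibility of $X(A)$ nor the nonvanishing torsion in $\mo{CH}^2$ translates into a splitting-field bound beyond the index.

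What the paper does instead is to use the transfer variety itself, not a $0$-cycle on it. The twisted Segre embedding $\phi^B_{L/F}:\mo{tr}_{L/F}X(B)\hookrightarrow X(\mo{cor}_{L/F}B)=X(A)$ realizes $\mo{tr}_{L/F}X(B)$ as a cycle on $X(A)$ of \emph{intermediate} codimension $k=p^{rp}-1-p(p^r-1)$ and of degree with $v_p=r(p-1)$ (Lemma~\ref{degree_lemma}); here one must keep the original $B$ so that $\mo{cor}_{L/F}B$ literally equals $A$, not merely its Brauer class. Karpenko's Lemma~\ref{karpenko_lemma} then gives, for cycles of that codimension, a lower bound on $v_p(\deg)$ which is strictly larger than $r(p-1)$, and the paper verifies the two explicit families of inequalities directly. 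The threshold $n<p^2$ enters earlier and more simply than in your Sylow reduction: after passing to a prime-to-$p$ closure of $F$, the field $L$ has $[L:F]=p^s$ automatically, the degree identity $n=rp^s$ with $r\ge 1$ forces $s=1$, and one is immediately in the degree-$p$ case without any intermediate $L_0$ or replacement of $B$ by a division algebra.
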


For $n \geq p^2$, for example if $A$ is a division algebra of index
$p^{p^2}$, the obstruction used in the proof of theorem \ref{indecomp}
to show $A$ is not a corestriction vanishes. This raises the question of
whether or not such an algebra really may be written as a corestriction.

\section{Preliminaries} \label{prelim}

To begin it will be necessary to develop some machinery for
understanding the corestriction of algebras and its relation to the
transfer of a scheme.

\subsection{Galois twists}

Let $F$ be a field, $L/F$ a Galois extension of separable algebras with
group $G$ and let $V$ be an $L$-module. For $\sigma$ in $G$, we define a
new $L$-module $\up\sigma V$ as follows. As a set, we define
\[\up\sigma V = \{\up\sigma(v) | v \in V\},\]
and we endow it with the operations $\up\sigma(v) + \up{\sigma}(w) =
\up\sigma(v+w)$ and for $x \in L$, we set $x \, \up\sigma(v) =
\up\sigma((\sigma^{-1} x) v)$. We let $\phi_\tau : \up\sigma V \to
\up{\tau\sigma} V$ denote the natural map $\phi_\tau(\up\sigma(v)) =
\up{\tau\sigma}(v)$. Note that this map is $\tau$-linear in the sense
that $\phi_\tau(x \, \up\sigma(v)) = \tau(x) \phi_\tau(\up\sigma(v))$.
One may also check that these maps satisfy $\phi_\sigma \phi_\tau =
\phi_{\sigma \tau} : \up\gamma V \to \up{\sigma\tau\gamma} V$.
By composing with these maps it is easy to check that there is a natural
isomorphism of bifunctors $Hom_L(\, \up\sigma V, W) = Hom_{L,\sigma}(V, W)$
giving an equivalence between $L$-linear maps from $\up\sigma V$ to $W$
and $\sigma$-linear maps from $V$ to $W$.

Regarding $V \mapsto \up\sigma V$ as a functor from the category of
$L$-modules to itself, we note that it is additive and monoidal. That
is, there are natural isomorphisms $\up\sigma(V \oplus W) = \up\sigma V
\oplus \up\sigma W$ and $\up\sigma(V \otimes_L W) = \up\sigma V
\otimes_L \up\sigma W$.

This definition may easily be extended to an additive and monoidal
functor from the category of $L$-algebras to itself. Suppose $A$ is an
$L$-algebra. Then we define the $L$ algebra $\up\sigma A$ to be the
algebra with underlying vector space as defined above, and with the
multiplication rule $\up\sigma(a)\up\sigma(b) = \up\sigma(a b)$. Note
that this amounts to the same thing as taking the same underlying ring
and taking the new $L$-structure map $L \to A$ to be the original one
composed with the automorphism $\sigma^{-1}$. As before, the maps
$\phi_\sigma$ make sense with the same definition and we again have a
natural isomorphism $Hom_L(\up\sigma A, B) = Hom_{L, \sigma}(A, B)$ of
algebra hom sets.

We may similarly extend this definition to schemes defined over $L$ by
patching over affine sets. For an $L$ scheme $X$, we denote the
resulting variety by $\up\sigma X$. By the previous paragraph this
amounts to the same thing as taking the same $F$-scheme and composing
the structure morphism with the map $\sigma^{-1}: \mo{Spec} L \to
\mo{Spec} L$. We also obtain a natural isomorphism $Hom_L(Y, \up\sigma
X) = Hom_{L, \sigma}(Y, X)$ as before.

\subsection{Coset twists}

We now extend these constructions to the case where $L/F$ is a separable
extension which is not necessarily Galois. Let $E/L$ be a Galois closure
of $L/F$ so that $E/F$ is Galois with group $G$ and let $H$ be the
subgroup fixing $L$. For an $L$ module $V$ and $\sigma \in G$, we define
the coset twist $\up{\sigma H}V$ (which will be an $E$-module) as
follows. As a set we define
\[\up{\sigma H} V = \frac{\{\, \up\tau(v) | \tau \in \sigma H, v \in V
\otimes_L E\}}{\sim}\]
Where the equivalence relation $\sim$ is defined by letting $\up\tau(v)
= \up\gamma(w)$ if and only if $\gamma^{-1}\tau(v) = w$. Note that
$\gamma^{-1}\tau \in H$ and since $v \in V \otimes_L E$, elements of $H$
act naturally via the second factor in the tensor product. We define the
$E$-module operations by setting $\up\tau(v) \up\gamma(w) =
\up\gamma((\gamma^{-1}\tau (v)) w)$ and $x \, \up\tau(v) =
\up\tau((\tau^{-1} x) v)$.  Note that the map $\up\sigma (V \otimes_L E)
\to \up{\sigma H} V$ defined by $\up\sigma(v) \mapsto \up\sigma(v)$ is
an isomorphism.

As before, we have natural morphisms $\phi_\sigma : \up{\tau H} V \to
\up{\sigma\tau H} V$ via $\phi_\sigma(\up\gamma(v)) =
\up{\sigma\gamma}(v)$. Once again, we may check $\phi_\sigma \phi_\tau =
\phi_{\sigma \tau}$.

This definition may be extended to algebras and varieties, and we make
free use of this fact.

For the ease of exposition for the proof of the following lemma we make
the following definition. Given a a scheme $X$ and vector bundles $W, V$
over $X$, we call an embedding $W \hra V$ admissible if $V/W$ is also a
vector bundle (and not merely a coherent sheaf). Equivalently, this says
that $W$ is locally a direct summand of $V$. In the case $W \subset V$,
we say that $W$ is admissible if the inclusion is admissible.

\begin{lem} \label{twist_SBV}
Suppose $A$ is an Azumaya algebra over $L$ and $m < \mo{deg}
A$. Then there is a natural isomorphism $\up{\sigma H} X_m(A) =
X_m(\up{\sigma H} A)$
\end{lem}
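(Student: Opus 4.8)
The plan is to factor the coset twist as an ordinary base change followed by a Galois twist, and to observe that the generalized Severi--Brauer construction $X_m(-)$ is compatible with each of these operations. Recall that for an Azumaya algebra $B$ of degree $d > m$ over an $F$-algebra $R$, the scheme $X_m(B)$ represents the functor sending an $R$-scheme $T$ to the set of right ideals $\mc I \subseteq B \otimes_R \cO_T$ for which the inclusion is admissible and $\mc I$ has $\cO_T$-rank $md$; equivalently, $X_m(B)$ is the closed subscheme of the Grassmannian scheme of admissible rank-$md$ sub-bundles of $B$ (over $\mo{Spec} R$) that parametrizes those sub-bundles which are stable under right multiplication. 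In either description $X_m(B)$ is built functorially from the pair consisting of the $R$-module $B$ and its multiplication map $B \otimes_R B \to B$, using only constructions (Grassmannian, admissible sub-bundle, a right-multiplication stability condition) that commute with arbitrary base change. Consequently, for any $R$-algebra $R'$ one has $X_m(B) \times_{\mo{Spec} R} \mo{Spec} R' \cong X_m(B \otimes_R R')$, and for any $\tau$ in a relevant Galois group the additive monoidal functor $\up\tau$ carries the multiplication of $B$ to that of $\up\tau B$ and admissible sub-bundles/right ideals to admissible sub-bundles/right ideals, whence $\up\tau X_m(B) \cong X_m(\up\tau B)$.

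With these two compatibilities in hand I would argue as follows. Fix a representative $\tau \in \sigma H$ of the coset; since $H$ fixes $L$, the isomorphism $\up\tau(V \otimes_L E) \cong \up{\sigma H} V$ recorded in the text — together with its extension to algebras and to schemes — identifies, for any $L$-scheme $Y$, the coset twist $\up{\sigma H} Y$ with the Galois twist $\up\tau(Y \otimes_L E)$ of the base change of $Y$ to $E$ (and likewise $\up{\sigma H} A \cong \up\tau(A \otimes_L E)$, which is Azumaya over $E$ of degree $d > m$, so both sides of the lemma are defined). Taking $Y = X_m(A)$ and applying base-change compatibility and then Galois-twist compatibility yields
\[\up{\sigma H} X_m(A) \;\cong\; \up\tau\bigl(X_m(A) \otimes_L E\bigr) \;\cong\; \up\tau\, X_m\bigl(A \otimes_L E\bigr) \;\cong\; X_m\bigl(\up\tau(A \otimes_L E)\bigr) \;\cong\; X_m\bigl(\up{\sigma H} A\bigr).\]

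The step that requires real care, and which I expect to be the main point of the proof, is \emph{naturality}: one must check that the resulting isomorphism is independent of the choice of representative $\tau$ within $\sigma H$ and that it intertwines the transition maps $\phi_\gamma$ on the two sides — it is this, rather than a bare isomorphism of $E$-schemes, that will be needed for the later applications to the transfer. This comes down to tracing the module-level maps $\phi_\gamma$ (which satisfy $\phi_\gamma \phi_\delta = \phi_{\gamma\delta}$) through each building block of $X_m$ — the vector bundle underlying $A$, its multiplication map, and the ambient Grassmannian — and invoking the functoriality of $X_m$; no new idea is needed, only careful bookkeeping. Alternatively one can sidestep much of this by working with represented functors throughout: compute $\bigl(\up{\sigma H} X_m(A)\bigr)(T)$ for an $E$-scheme $T$ via the adjunction $Hom_E\bigl(T, \up{\sigma H} X\bigr) = Hom_{L,\tau}(T, X)$ and verify directly that it coincides, compatibly with all structure maps, with the set of admissible right ideals of $\bigl(\up{\sigma H} A\bigr) \otimes_E \cO_T$.
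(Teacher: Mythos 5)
Your proposal is correct and follows essentially the same route as the paper: factor the coset twist through a chosen representative as a base change to $E$ followed by a Galois twist, then use that $X_m(-)$ commutes with both operations. The "alternative" functor-of-points argument you sketch at the end — running the adjunction $Hom_E(Y,\up{\sigma H}X)=Hom_{E,\sigma}(Y, X\times_L E)=Hom_E(\up{\sigma^{-1}}Y, X_m(A\otimes_L E))$ and then transporting the bundles of right ideals by $\up\sigma(\,)$ — is in fact exactly the paper's proof.
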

\begin{proof}
We will exhibit this by giving a natural isomorphism between the
functors which these schemes represent. For a $E$-scheme $Y$, recall that
$Hom_L(Y, X_m(A))$ may be thought of as the set of admissible subvector
bundles $I \subset Y \times_L A$ (where here $A$ is thought of as a vector
bundle over $\mo{Spec} L$) such that the sheaf corresponding to $I$ is a
sheaf of right ideals of rank $m \mo{deg} A$ in the sheaf of algebras
corresponding to $Y \times A$.

We have natural isomorphisms 
\begin{align*}
Hom_E(Y, \, \up{\sigma H} X_m(A)) = Hom_E(Y, \, \up\sigma (X_m(A)
\times_L E)) = Hom_{E, \sigma}(Y, X_m(A \otimes_L E)) \\
= Hom_E(\up{\sigma^{-1}} Y, X_m(A \otimes_L E))
\end{align*}
This last set may be identified with bundles of ideals $I \subset
\up\sigma^{-1} Y \times_E (A \otimes_L E)$, and these naturally
correspond (via application of the functor $\up\sigma(\,)$) to bundles
of ideals $I' \subset Y \times_E \up\sigma(A \otimes_L E)$ via $I
\mapsto I' = \up\sigma I$. We therefore have
\begin{align*}
Hom_E(\up{\sigma^{-1}} Y, X_m(A \otimes_L E)) = Hom_E(Y, X_m(\up\sigma(A
\otimes E))) = Hom_E(Y, X_m(\up{\sigma H} A))
\end{align*}
as desired.
\end{proof}

\subsection{Corestriction and transfer}

Suppose we have a separable extension of commutative rings $L/F$ and a
Galois closure $E/L$ with $G = Gal(E/F)$, $H$ the subgroup fixing $L$.
Given an $L$-algebra $A$, we define 
\[A^{E/L/F} = \underset{\sigma H}{\otimes} \, \up{\sigma H} A\]
where the tensor product (over the algebra $E$) is taken over all cosets
of $H$ in $G$. The group $G$ acts naturally on this algebra by
defining the action on simple tensors to be
\[\tau (\underset{\sigma H \in G/H}\otimes a_{\sigma H}) =
\underset{\sigma H \in G/H}\otimes \phi_{\tau}^{-1}(a_{\tau \sigma H})\] 
This action is semilinear in the sense that $\tau(x b) = \tau(x)
\tau(b)$. By the theory of Galois descent, the algebra $A^{E/L/F}$
together with the $G$ action give the descent data for an $F$ algebra
which we call the corestriction. Explicitly we may define the
corestriction of $A$ to be the fixed algebra $\mo{cor}_{L/F} A =
(A^{E/L/F})^G$.

\begin{lem} \label{cor_product}
Suppose $L = \prod_{i = 1}^k L_i$, and $A$ is an $L$ algebra. Then
writing $A_i = L_i A$, we have $\mo{cor}_{L/F} A = \otimes
\mo{cor}_{L_i/F} A_i$.
\end{lem}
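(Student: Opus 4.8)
The plan is to reduce to the field case by decomposing the set of embeddings of $L$ into a Galois closure according to the factors $L_i$, and then to observe that Galois descent sends disjoint unions of embedding sets to tensor products of the corresponding corestrictions. First I would fix a finite Galois extension $E/F$ with group $G$ large enough to split $L$ (a compositum of Galois closures of the $L_i$), so each $L_i$ embeds into $E$; let $\Omega = \mathrm{Hom}_F(L,E)$ be the set of $F$-algebra homomorphisms $L \to E$, on which $G$ acts by post-composition. (When $L$ is a field this is the set $G/H$ appearing in the construction of the corestriction.) Since $L = \prod_{i=1}^k L_i$ and $E$ is connected, every such homomorphism factors through exactly one projection $L \to L_i$, so $\Omega = \coprod_{i=1}^k \Omega_i$ with $\Omega_i = \mathrm{Hom}_F(L_i,E)$, and this decomposition is $G$-stable because $g\circ\omega$ factors through the same $L_i$ as $\omega$. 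Each $\Omega_i$ is the transitive $G$-set used in building $\mo{cor}_{L_i/F}$.

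Next I would rewrite $A^{E/L/F} = \bigotimes_{\omega\in\Omega}{}^{\omega}A$ using the decomposition of $\Omega$: since a tensor product over $E$ of a family indexed by a disjoint union factors as the tensor product over the pieces, one gets
\[
A^{E/L/F}\;=\;\bigotimes_{i=1}^{k}\Bigl(\,\bigotimes_{\omega\in\Omega_i}{}^{\omega}A\,\Bigr).
\]
For $\omega\in\Omega_i$ the twist ${}^{\omega}A$ sees $A$ only through the embedding $\omega$, which factors through $L_i$, so ${}^{\omega}A = A\otimes_{L,\omega}E = A_i\otimes_{L_i,\omega}E = {}^{\omega}A_i$; hence the $i$-th inner factor is exactly $A_i^{E/L_i/F}$. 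The key point is that the $G$-action on $A^{E/L/F}$ — which permutes the twist factors along the $G$-action on $\Omega$ while inserting the transition maps $\phi_\tau$ — preserves each block $\bigotimes_{\omega\in\Omega_i}{}^{\omega}A$, because $G$ preserves $\Omega_i$ and $\phi_\tau$ carries the $\omega$-factor to the $(\tau\omega)$-factor; restricted to the $i$-th block it is precisely the action defining $A_i^{E/L_i/F}$. Thus $A^{E/L/F}\cong\bigotimes_{i=1}^{k}A_i^{E/L_i/F}$ as $E$-algebras equipped with semilinear $G$-action.

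Finally I would invoke compatibility of Galois descent with tensor products. Each pair $(A_i^{E/L_i/F},G)$ is the descent datum for $\mo{cor}_{L_i/F}A_i$, i.e.\ there is a $G$-equivariant isomorphism $A_i^{E/L_i/F}\cong(\mo{cor}_{L_i/F}A_i)\otimes_F E$ with $G$ acting through the right-hand factor. Tensoring these over $E$ gives a $G$-equivariant isomorphism
\[
\bigotimes_{i=1}^{k}A_i^{E/L_i/F}\;\cong\;\Bigl(\bigotimes_{i=1}^{k}\mo{cor}_{L_i/F}A_i\Bigr)\otimes_F E,
\]
again with $G$ acting on the right-hand factor only; so $\bigotimes_i A_i^{E/L_i/F}$ is the descent datum for the $F$-algebra $\bigotimes_i\mo{cor}_{L_i/F}A_i$, and taking $G$-invariants yields $\mo{cor}_{L/F}A=(A^{E/L/F})^G=\bigotimes_i\mo{cor}_{L_i/F}A_i$. (Alternatively one may induct on $k$, the only case requiring argument being $k=2$ via $L=L_1\times L'$.) I expect the main obstacle to be purely organizational: matching the paper's coset-twist notation ${}^{\sigma H}A$ with the embedding-indexed description when $L$ is a product rather than a field, and verifying that each identification above is genuinely $G$-equivariant; no substantive difficulty should arise.
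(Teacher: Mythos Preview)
Your argument is correct, but it proceeds differently from the paper. The paper factors through an intermediate split \'etale algebra $K=\prod_{i=1}^k F\subset L$: it first invokes transitivity of the corestriction, $\mo{cor}_{L/F}=\mo{cor}_{K/F}\circ\mo{cor}_{L/K}$, to reduce to the case where every $L_i=F$, and then handles that split case by choosing an arbitrary group $G$ of order $k$ acting transitively on the idempotents and reading off the $G$-invariants of $\prod_{\sigma\in G}(A_1\otimes\cdots\otimes A_k)$ as the diagonal. You instead work in a single Galois closure $E/F$ and reindex the twist factors by the $G$-set $\Omega=\mathrm{Hom}_F(L,E)$, whose $G$-stable decomposition $\Omega=\coprod_i\mathrm{Hom}_F(L_i,E)$ immediately exhibits $A^{E/L/F}$ as a tensor product of the $A_i^{E/L_i/F}$ with block-diagonal $G$-action. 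Your route is more uniform and avoids both the transitivity lemma and the ad hoc choice of $G$ in the split case; the paper's route is shorter once one grants transitivity, and stays within the coset-twist formalism already set up. The only point you should make explicit is the identification of the paper's coset-indexed object $\underset{\sigma H}{\otimes}\,{}^{\sigma H}A$ with your embedding-indexed $\underset{\omega\in\Omega}{\otimes}\,{}^{\omega}A$ when $L$ is a field, via the usual $G$-equivariant bijection $G/H\cong\mathrm{Hom}_F(L,E)$; once that dictionary is in place your argument goes through without difficulty.
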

\begin{proof}
We leave to the reader verification of the fact that for any \'etale
extensions $F \subset K \subset E$, that $\mo{cor}_{E/F} =
\mo{cor}_{K/F} \circ \mo{cor}_{E/K}$. Let $K$ be the split \'etale
extension $\prod_{i = 1}^k F$. It follows easily that $\mo{cor}_{E/K} =
\prod \mo{cor}_{L_i/F} A_i$. Therefore we only need to verify the
statement in the case that $L$ is itself a split algebra.

Now, assuming that $L_i = F$ for each $i$, we choose any group $G$ of
order $k$, and let it act transitively by permutations of the
idempotents of $L$. We may then regard $L/F$ as a $G$-Galois extension,
and so $\mo{cor}_{L/F} A = (A^{L/F})^G$. Thinking of $A = \prod A_i$, we
may write $A^{L/F}$ as a product $\prod_{\sigma \in G}
(A_1 \otimes \cdots \otimes A_k)$ where the elements of $G$ simply
permute the terms of the product. An element if fixed by the $G$ action
if it is in the image of the diagonal embedding $A_1 \otimes \cdots
\otimes \cdots A_k \hra A^{L/F}$, completing the proof.
\end{proof}

Similarly, if $E, L$ and $G$ are as above and we have an $L$-scheme $X$,
we define 
\[X^{E/L/F} = \underset{\sigma H}{\times} \, \up{\sigma H} X\]
with the fiber product taken with respect to $\mo{Spec} E$. As before we
have a natural Galois action by the group $G$, and so by Galois descent
it corresponds to an $F$-scheme which we denote $\mo{tr}_{L/F} X$.

\subsection{A bound on the degree of a splitting field}

Let $L$ be a finite commutative separable extension of $F$ and let $A$
be an Azumaya algebra over $L$. We do not assume that the algebra
$A$ has constant rank over $L$. We define $X(A)$, the Severi-Brauer
variety of $A$, to be the scheme parametrizing right ideals $I$ of $A$
such that for $p \in \mo{Spec} L$, $I_p$ is a right ideal of $A_p$ of
rank $\mo{deg} A_p$ (see \cite{Gro:GB1}).

\begin{prop} \label{a_priori_split}
Let $A$, $L$, $F$ be as above. Then there exists a commutative separable
extension $E/F$ with $[E:F] = \prod_p (\mo{deg} A_p)^{[F(p):F]}$ (where $p$
ranges through the points of $\mo{Spec} L$ and $F(p)$ is the residue
field of $p$) such that $A \otimes_F E$ is a split $L \otimes_F E$
algebra.
\end{prop}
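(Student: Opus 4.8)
The plan is to use the transfer construction of Section~\ref{prelim} to recast the statement as the problem of finding a single closed point on the Weil restriction $\mo{tr}_{L/F} X(A)$ of the Severi--Brauer scheme of $A$, and then to produce such a point by transferring a maximal \'etale splitting subalgebra of $A$.

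Two standard facts drive the argument. First, for any $L$-algebra $K$ which is a finite product of fields, $A \otimes_L K$ is split if and only if $X(A)$ has a $K$-point, i.e.\ an $L$-morphism $\mo{Spec} K \to X(A)$: this is because $X(A) \times_L K = X(A \otimes_L K)$ and a Severi--Brauer scheme is split precisely when it admits a section (\cite{Gro:GB1}). Second, the transfer $\mo{tr}_{L/F}$ introduced in Section~\ref{prelim} is the Weil restriction, so it satisfies $\mo{Hom}_F(T, \mo{tr}_{L/F} X) = \mo{Hom}_L(T \times_F \mo{Spec} L, X)$ for every $F$-scheme $T$. Combining these, for an \'etale $F$-algebra $E$ we have: $A \otimes_F E$ is split over $L \otimes_F E$ $\iff$ $X(A)(L \otimes_F E) \neq \emptyset$ $\iff$ $\bigl(\mo{tr}_{L/F} X(A)\bigr)(E) \neq \emptyset$. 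So it suffices to construct an \'etale $F$-algebra $E$ of the prescribed dimension together with an $F$-morphism $\mo{Spec} E \to \mo{tr}_{L/F} X(A)$.

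Write $L = \prod_i L_i$ with $L_i = F(p_i)$ the residue fields and $A = \prod_i A_i$ with $A_i$ central simple over $L_i$ of degree $d_i = \mo{deg} A_{p_i}$, so that $X(A) = \coprod_i X(A_i)$. For each $i$, choosing $A_i \cong M_{r_i}(D_i)$ with $D_i$ a division algebra of degree $e_i$ and $K_i' \subset D_i$ a separable maximal subfield, set $K_i = (K_i')^{\times r_i}$; this is an \'etale $L_i$-algebra of degree $r_i e_i = d_i$, each of whose field factors splits $A_i$, so it provides a point $\mo{Spec} K_i \to X(A_i)$. Then $K := \prod_i K_i$ is \'etale over $L$, the points above assemble to a morphism $\mo{Spec} K \to X(A)$, and applying $\mo{tr}_{L/F}$ gives $\mo{tr}_{L/F} \mo{Spec} K \to \mo{tr}_{L/F} X(A)$. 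Since the transfer of a finite \'etale algebra is a finite \'etale algebra, $\mo{tr}_{L/F} \mo{Spec} K = \mo{Spec} E$ for some \'etale $F$-algebra $E$, and by the displayed adjunction this last morphism is exactly a point $\mo{Spec} E \to \mo{tr}_{L/F} X(A)$; hence $A \otimes_F E$ is split over $L \otimes_F E$.

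Finally, $[E:F]$ is the number of $\ov F$-points of $\mo{Spec} E$, which by the adjunction (taking $T = \mo{Spec} \ov F$) equals the cardinality of $\mo{Hom}_L(\mo{Spec}(\ov F \otimes_F L), \mo{Spec} K)$. Writing $\ov F \otimes_F L$ as a product of copies of $\ov F$ indexed by the $[L:F]$ $F$-algebra homomorphisms $L \to \ov F$, such a homomorphism factors through a unique projection $L \to L_i$, there are $[L_i:F]$ of each type $i$, and each one admits exactly $\dim_{L_i} K_i = d_i$ compatible maps to $\mo{Spec} K$; multiplying over all the factors gives $[E:F] = \prod_i d_i^{[L_i:F]} = \prod_p (\mo{deg} A_p)^{[F(p):F]}$, as required. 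The only nonroutine ingredient is the identification of $\mo{tr}_{L/F}$ with the Weil restriction together with the careful handling of its universal property; everything else is formal.
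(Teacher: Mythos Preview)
Your proof is correct and follows essentially the same route as the paper: take a maximal \'etale subalgebra $K \subset A$, obtain $\mo{Spec} K \to X(A)$, apply the transfer to get $\mo{Spec} E = \mo{tr}_{L/F}\mo{Spec} K \to \mo{tr}_{L/F} X(A)$ with $E = \mo{cor}_{L/F} K$, and invoke the adjunction. The only difference is that you spell out the construction of $K$ componentwise and actually carry out the degree count $[E:F] = \prod_i d_i^{[L_i:F]}$, which the paper leaves to the reader.
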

\begin{proof}
Let $K \subset A$ be a maximal commutative separable subalgebra of $A$.
Since $A \otimes_L K$ is a split algebra over $K$, we obtain a morphism
of $L$-schemes $\mo{Spec} K \to \mo{Spec} X(A)$, and therefore a map
\[\mo{Spec} (\mo{cor}_{L/F} K) = \mo{tr}_{L/F} (\mo{Spec} K) \to
\mo{tr}_{L/F} X(A)\]

By the adjointness property of the transfer (see \cite{Serre:TG}), we
have \[Mor_F(\mo{Spec} (\mo{cor}_{L/F} K), \mo{tr}_{L/F} X(A)) =
Mor_L(\mo{Spec} (L \otimes_F \mo{cor}_{L/F} K), X(A))\]
and in particular, $X(A)$ has an $L \otimes_F \mo{cor}_{L/F} K$-point.
Setting $E = \mo{cor}_{L/F} K$, we now check that $E$ has the stated
dimension and note that $A \otimes_F E = A \otimes_L (L \otimes_F E)$ is
split.
\end{proof}

\subsection{Twisted Segre embeddings}

\begin{lem}
There is a natural closed embedding
\[\phi_{L/F}^A : \mo{tr}_{L/F} X(A) \to X(\mo{cor}_{L/F} A)\]
\end{lem}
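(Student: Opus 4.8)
The plan is to construct the embedding functorially, descending a map that exists after base change to the Galois closure $E$. Over $E$, the twisted transfer $\mo{tr}_{L/F} X(A)$ becomes the product $\prod_{\sigma H} \up{\sigma H} X(A)$, which by Lemma \ref{twist_SBV} is $\prod_{\sigma H} X(\up{\sigma H} A)$, while $\mo{cor}_{L/F} A$ base-changed to $E$ is $A^{E/L/F} = \bigotimes_{\sigma H} \up{\sigma H} A$. So the statement reduces, after descent, to producing a natural closed embedding of the product of Severi--Brauer varieties $\prod_i X(B_i)$ into the Severi--Brauer variety $X(\bigotimes_i B_i)$ of the tensor product, for Azumaya algebras $B_i$ over a commutative ring. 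This is the \emph{Segre embedding} in the noncommutative/Azumaya setting: given right ideals $I_i \subset B_i$ that are locally direct summands of the appropriate rank, one forms $\bigotimes_i I_i \subset \bigotimes_i B_i$, which is again locally a direct summand of the correct rank (since rank is multiplicative under tensor product) and is a right ideal of the tensor product algebra. One checks this defines a morphism of the representing functors — i.e., for a test scheme $Y$, sending a tuple of ideal bundles to their tensor product — and that it is a closed immersion by reducing to the split case where it is the classical Segre embedding $\prod \bP^{n_i-1} \hra \bP^{(\prod n_i)-1}$.

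First I would make precise the functor-of-points description: $X(B)$ represents $Y \mapsto \{\text{admissible right ideal sheaves } I \subset Y \times B \text{ of the locally-correct rank}\}$, exactly as recalled in the proof of Lemma \ref{twist_SBV} and in the definition of $X(A)$ preceding Proposition \ref{a_priori_split}. Then I would define, on $Y$-points, the Segre map $(I_1, \ldots, I_k) \mapsto I_1 \otimes_{\cO_Y} \cdots \otimes_{\cO_Y} I_k$ inside $Y \times (B_1 \otimes \cdots \otimes B_k)$, verifying locally (where each $B_i$ splits and each $I_i$ is a free summand) that the result is an admissible right ideal of the right rank. Next I would argue this is a closed immersion: closed immersions may be checked locally on the target, and after passing to a splitting cover the map becomes the honest Segre embedding of projective spaces composed with the identification of $X$ of a matrix algebra with projective space, hence a closed immersion; since being a closed immersion descends along faithfully flat covers, we are done in the split-\'etale (product) case.

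Finally I would return to the nonsplit case by Galois descent. The Segre-type embedding $\prod_{\sigma H} X(\up{\sigma H} A) \hra X(\bigotimes_{\sigma H} \up{\sigma H} A)$ just constructed is manifestly compatible with the natural $G$-semilinear actions: the $G$-action on the transfer permutes the factors via the $\phi_\tau$ and the same permutation on the tensor-product side is precisely the descent datum defining $\mo{cor}_{L/F} A$, so the embedding intertwines the two descent data. By faithfully flat descent of closed subschemes, it therefore descends to a closed embedding $\phi_{L/F}^A : \mo{tr}_{L/F} X(A) \to X(\mo{cor}_{L/F} A)$ over $F$, and naturality in $A$ (and in $L/F$) follows from the functoriality of all the constructions involved. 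The main obstacle I anticipate is the bookkeeping in the equivariance check: one must track carefully how the twisting functors $\up{\sigma H}(\,)$, the isomorphisms of Lemma \ref{twist_SBV}, and the permutation action on the tensor factors interact, to confirm that the Segre map is genuinely $G$-equivariant rather than equivariant only up to an unwanted coboundary; once that compatibility is pinned down, the descent and the closed-immersion property are formal.
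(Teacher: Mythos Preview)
Your proposal is correct and follows essentially the same route as the paper: pass to the Galois closure $E$, identify the source via Lemma~\ref{twist_SBV} with $\prod_{\sigma H} X(\up{\sigma H} A)$, define the map by sending a tuple of ideals to their tensor product inside $A^{E/L/F}$, check $G$-equivariance, and descend. In fact you are more careful than the paper, which asserts the map is a closed embedding but does not actually verify this in the proof; your reduction to the classical Segre embedding after a splitting cover, together with fpqc descent of closed immersions, fills that gap.
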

\begin{proof}
Let $E/L$ be a Galois closure of $L/F$ with Galois group $G$ acting on
$E/F$ and with $H$ the subgroup fixing $L$.
We define this morphism by descent by constructing a morphism
\[\phi : X(A)^{E/L/F} \to X(\mo{cor}_{L/F} A) \times_F E =
X((\mo{cor}_{L/F} A) \otimes_F E) = X(A^{E/L/F})\]
Note that by lemma \ref{twist_SBV} we may write 
\[X(A)^{E/L/F} = \underset{\sigma H}\prod \, \up{\sigma H} X(A) =
\underset{\sigma H}\prod \, X(\up{\sigma H} A)\]
We define the map $\phi$ by sending a tuple of ideals indexed by the
cosets $G/H$, $(I_{\sigma H}) \in X(A)^{E/L/F} = \underset{\sigma
H}\prod \, X(\up{\sigma H} A)$ to the tensor product of the ideals
$\phi(I_{\sigma H}) = \underset{\sigma H}\otimes I_{\sigma H}$, and note
that this commutes with the natural action of the Galois group.
Therefore we obtain by descent our desired morphism.
\end{proof}

\begin{rem}
In the case that $L/F$ is a split \'etale extension, we have $A = A_1
\times \cdots \times A_m$ and the map $\phi_{L/F}^A$ may be written as
the map $X(A_1) \times \cdots \times X(A_m) \to X(A_1 \otimes \cdots
\otimes A_m)$ given by $(I_1, \ldots, I_m) \mapsto I_1 \otimes \cdots
\otimes I_m$. This map was investigated by Karpenko in
\cite{Kar:C2ind} - in particular, it was shown to be a twisted form
of the Segre embedding $\mbb P^{n-1} \times \mbb P^{m-1} \to \mbb P^{n^m
- 1}$.
\end{rem}

Let $X_{L/F}(A)$ denote the image of $\phi_{L/F}^A$. We define
$\mo{deg} X_{L/F}(A)$ to be the degree of the subvariety 
\[X_{L/F}(A)_{\ov F} \subset \mbb P^N\]
where $\ov{F}$ is an algebraic closure of $F$. For an algebra $A$ as
above, note that $A \otimes_L \ov{F}$ is a product of (split) algebras
$A_i$. 
\begin{defn} \label{deg_def}
Let $A$ be an Azumaya algebra over $L$, where $L/F$ is \'etale of
dimension $m$. Writing $A \otimes_F \ov{F} = A_1 \times \cdots \times
A_m$ for central simple algebras $A_i$, we define $\mo{deg} A$ to be the
(unordered) list of degrees $(\mo{deg} A_1, \mo{deg} A_2, \ldots,
\mo{deg} A_m)$.
\end{defn}

\begin{lem} \label{degree_lemma}
Suppose $L/F$ has degree $m$ and $A/L$ has degree $(d_1, \ldots, d_m)$.
Then
$\mo{deg} X_{L/F}(A)$ is the multinomial coefficient
\[\binom{d_1 + \cdots + d_m - m}{d_1 - 1, \ldots, d_m - 1} = \frac{(d_1
+ \cdots + d_m - m)!}{(d_1 - 1)! \cdots (d_m - 1)!}\]
\end{lem}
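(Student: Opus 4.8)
The plan is to reduce the computation of $\deg X_{L/F}(A)$ to a purely geometric computation over the algebraic closure $\ov F$, where all the twisting disappears and the map $\phi^A_{L/F}$ becomes an honest Segre-type embedding. Over $\ov F$ we have $L \otimes_F \ov F \cong \ov F^{\, m}$, the algebra $A \otimes_F \ov F$ splits as a product $A_1 \times \cdots \times A_m$ with $A_i$ split of degree $d_i$, and by the remark following the twisted Segre lemma the map $\phi^A_{L/F}$ base-changes to the iterated Segre embedding
\[
s\colon \bP^{d_1 - 1} \times \cdots \times \bP^{d_m - 1} \lra \bP^{N}, \qquad N + 1 = d_1 \cdots d_m,
\]
sending $(\ell_1, \ldots, \ell_m)$ to $\ell_1 \otimes \cdots \otimes \ell_m$. (Here I am using that $X(A_i)_{\ov F} \cong \bP^{d_i - 1}$ and that $X(\cor_{L/F} A)_{\ov F} \cong \bP^N$ since the corestriction, like $A$ itself, is split over $\ov F$.) Thus $X_{L/F}(A)_{\ov F}$ is exactly the image of $s$, and $\deg X_{L/F}(A)$ is the degree of the Segre variety $s(\bP^{d_1-1} \times \cdots \times \bP^{d_m-1})$ in its Plücker-type embedding.

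The next step is to compute this degree. The degree of a $d$-dimensional projective variety $V \subset \bP^N$ equals $\int_V c_1(\cO(1))^{d}$, i.e. the top self-intersection of the hyperplane class; and $s$ is a closed embedding, so $s^* \cO_{\bP^N}(1)$ is the Segre line bundle $\cO(1,1,\ldots,1)$ on the product. Writing $h_i$ for the hyperplane class pulled back from the $i$-th factor $\bP^{d_i-1}$, the hyperplane class on the Segre variety is $h = h_1 + \cdots + h_m$, and the dimension of the variety is $\sum_i (d_i - 1) = \big(\sum d_i\big) - m =: D$. Hence
\[
\deg X_{L/F}(A) = \int_{\prod \bP^{d_i - 1}} (h_1 + \cdots + h_m)^{D}.
\]
Expanding by the multinomial theorem and using that on $\bP^{d_i-1}$ one has $\int h_i^{d_i - 1} = 1$ while $h_i^{e} = 0$ for $e \geq d_i$, the only surviving term is the one with exponent exactly $d_i - 1$ on each $h_i$, whose multinomial coefficient is $\binom{D}{d_1 - 1, \ldots, d_m - 1}$. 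This is precisely the claimed value $\dfrac{(d_1 + \cdots + d_m - m)!}{(d_1 - 1)! \cdots (d_m - 1)!}$.

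The main thing to be careful about — and what I expect to be the only real obstacle — is the identification of $X_{L/F}(A)_{\ov F}$ with the Segre image on the nose, as opposed to, say, some multiple cover or a variety with the same dimension but scheme structure issues. Concretely one must check: (i) that the transfer $\mo{tr}_{L/F} X(A)$ base-changes over $\ov F$ to the product $\prod_i X(A_i)_{\ov F}$ (this is the content of Galois descent for the transfer together with $L \otimes_F \ov F \cong \ov F^m$, already implicit in the construction of $\mo{tr}$), (ii) that $\phi^A_{L/F}$ really becomes the Segre embedding rather than a composite with a Frobenius-type twist — but $\ov F$ is algebraically closed so there is no such subtlety, and the formula $(I_1,\dots,I_m)\mapsto I_1\otimes\cdots\otimes I_m$ from the remark is exactly Segre, and (iii) that $\phi^A_{L/F}$ being a closed embedding (the twisted Segre lemma) guarantees $X_{L/F}(A)_{\ov F}$ is reduced and isomorphic to $\prod \bP^{d_i-1}$, so the line-bundle pullback computation is legitimate. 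Once these identifications are in place the degree computation is the standard intersection-theory fact recalled above; no further input is needed.
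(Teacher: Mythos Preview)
Your proposal is correct and follows essentially the same approach as the paper: reduce to the algebraically closed field, identify $X_{L/F}(A)_{\ov F}$ with the Segre image of $\bP^{d_1-1}\times\cdots\times\bP^{d_m-1}$ in $\bP^{d_1\cdots d_m-1}$, and compute the degree as the top self-intersection $(h_1+\cdots+h_m)^D$ in the Chow ring, where only the multinomial term with exponents $(d_1-1,\ldots,d_m-1)$ survives. The paper's proof is slightly terser about the reduction step (it simply says ``without loss of generality $F$ is algebraically closed''), while you spell out the identifications (i)--(iii) more carefully, but the substance is identical.
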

\begin{proof}
Without loss of generality, we may assume that $F$ is algebraically
closed. In this case, we are really considering the embedding of $Y =
\mbb P^{d-1 - 1} \times \cdots \times \mbb P^{d_m - 1}$ into $\mbb P^{d_1
\cdots d_m -1}$ via the Segre embedding, which we will denote by $\phi$.
Let $\ell_i$ be the pullback of the divisor $\mc O_{\mbb P^{d_i -
1}}(1)$ via the natural projection $Y \to \mbb P^{d_i - 1}$. Recall that
the Chow ring of $Y$ may be written $\mo{CH}(Y) = \mbb Z[\ell_1, \ldots,
\ell_m]/(\ell_i^{d_i})$, and that the Segre embedding is given by the
divisor $D = \phi^* \mc O_{\mbb P^{d_1 d_2 \cdots d_m - 1}}(1) = \sum_i
\ell_i$. If we set $d = d_1 + d_2 + \cdots + d_m - m = \mo{dim} Y$, then
the degree of the map is therefore given by the degree of $D^d$, the top
self intersection of the divisor $D$. By the presentation of the Chow
ring of $Y$ given above, it follows that the only term which is nonzero
in the multinomial expansion of $D^d = (\sum_i \ell_i)^d$ is the term
\[\binom{d}{d_1 - 1, \ldots, d_m - 1} \prod \ell_i^{d_i - 1}\]
and the fact that $\prod \ell_i^{d_i - 1}$ may be interpreted as the
class of a closed point in $Y$ immediately implies the result.
\end{proof}

\section{Is the generic algebra a corestriction?}

Let $A$ be a central simple algebra over $F$ and suppose that $A =
\mo{cor}_{L/F} B$ for some \'etale extension $L/F$ of degree $m$ and
Azumaya $L$ algebra of constant degree $d$. In particular, this implies
$\mo{deg} A = d^m$. Since the corestriction map on the level of
cohomology $\mo{cor} : \mo{Br}(L) \to \mo{Br}(F)$ is a homomorphism, it
follows that $\mo{per}(A) | \mo{per}(B)$ and so $\mo{per}(A) | d$. It
therefore makes sense to ask when the converse holds - namely, if $A$ is
an algebra of degree $d^m$ and period $d$, when is it a corestriction of
an \'etale extension of degree $m$?

A priori, this question is a bit more general than the one of
indecomposability, since one knows by lemma \ref{cor_product}, that if
an algebra is decomposable, it must also be a corestriction (with
respect to a split \'etale extension).

It turns out that in the case $d$ is an odd prime, the arguments of
Karpenko from \cite{Kar:C2ind} generalize nicely to handle
corestrictions as well as decomposability. The relevant result which we
quote is a special case a result of Karpenko's:

\begin{lem}\label{karpenko_lemma}
Let $p$ be a prime number, $n$ a positive integer. Let $D(p^n, p)$ be a
generic division algebra of degree $p^n$ and period $p$ and let $X$ be
its Severi-Brauer variety. Then for any cycle $Z \in CH^k(X)$, the
$p$-adic valuation of the degree of $Z$ is greater than or equal to the
minimum of the following set of numbers:
\[\{i + n -v_p(k - i) | i = 0, \ldots, k-1\} \cup \{k\}\]
where $v_p$ denotes the $p$-adic valuation. Furthermore, this remains
true even after a prime to $p$ extension.
\end{lem}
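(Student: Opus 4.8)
The plan is to obtain the lemma as a specialization of Karpenko's computation of the reduced Chow groups of $X$ in \cite{Kar:C2ind}; nothing is proved here from scratch, and the whole task is to put his estimate into the stated shape. First I would make precise what is being asserted: over an algebraic closure $\ov F$ one has $X_{\ov F} \cong \bP^{p^n-1}$, so $CH^k(X_{\ov F}) = \Z h^k$ for the hyperplane class $h$, and the ``degree'' of a cycle $Z \in CH^k(X)$ is the integer $m$ with $Z|_{\ov F} = m h^k$. Thus the lemma is exactly a lower bound for the $p$-adic valuation of every element in the image of the restriction homomorphism $CH^k(X) \to CH^k(X_{\ov F}) \cong \Z$, and it is enough to identify a generator of the $p$-primary part of that image with an integer of the required valuation.

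Next I would recall the ingredients behind Karpenko's bound. By Quillen's theorem $K_0(X) \cong \bigoplus_{j=0}^{p^n-1} K_0(D^{\otimes j})$, where the $j$-th summand maps to $K_0(X_{\ov F}) = K_0(\bP^{p^n-1})$ through $\ind(D^{\otimes j}) \cdot [\cO(-j)]$; and for the generic algebra $D = D(p^n,p)$ one has $\ind(D^{\otimes j}) = p^{n}$ when $p \nmid j$ and $D^{\otimes j}$ split when $p \mid j$. Passing from $K_0$ to the topological filtration and expanding the line bundle classes $[\cO(-j)]$ in powers of $h$ introduces binomial coefficients whose $p$-adic valuations are what produce the terms $v_p(k-i)$; combining these with the factor $p^{n}$ coming from the index and taking a greatest common divisor over the index $i$ (the entry $\{k\}$ recording the contribution of the top step of the filtration, i.e.\ of the structure sheaf) yields precisely $\min\bigl(\{\, i + n - v_p(k-i) : 0 \le i \le k-1 \,\} \cup \{k\}\bigr)$, which is Karpenko's bound. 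For the last sentence, a finite extension $F'/F$ of degree prime to $p$ changes none of the indices $\ind(D^{\otimes j} \otimes_F F')$, so the same computation applies over $F'$; equivalently, $CH^k(X) \to CH^k(X_{F'})$ admits a transfer composing to multiplication by the $p$-adic unit $[F':F]$, so the valuation estimate survives base change.

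The only real work, and the place where one must be careful, is the combinatorial bookkeeping matching the expression above with the precise form of the estimate in \cite{Kar:C2ind} (and checking that the prime-to-$p$ stability is recorded there in the form needed); there is no genuine mathematical obstacle, since the result is being quoted rather than reproved.
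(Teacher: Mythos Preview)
Your proposal is correct and takes essentially the same approach as the paper: the paper's own ``proof'' of this lemma is simply the citation ``See \cite{Kar:C2ind}, proposition 1.3, and the proof of theorem 3.1,'' so you are doing exactly what the author does, only with more exposition of the $K$-theoretic mechanism behind Karpenko's bound. Your added explanation (Quillen's decomposition of $K_0(X)$, the role of the indices $\ind(D^{\otimes j})$, the transfer argument for prime-to-$p$ extensions) is accurate in outline, and your honest acknowledgment that the combinatorial matching with the precise statement in \cite{Kar:C2ind} is the only real work is exactly right.
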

\begin{proof}
See \cite{Kar:C2ind}, proposition 1.3, and the proof of theorem 3.1.
\end{proof}

\begin{thm} \label{indecomp}
Let $A$ be generic division algebra of degree $p^n$ and period $p$, and
let $F$ be the center of $A$. If $n < p^2$ then $A$ cannot be written as
$cor_{L/F} B$ for any \'etale extension $L/F$ and any Azumaya algebra
$B$ over $L$.
\end{thm}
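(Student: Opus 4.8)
The plan is to run a contrapositive argument: suppose $A = \mo{cor}_{L/F} B$ for some \'etale extension $L/F$ of degree $m$ and an Azumaya algebra $B$ over $L$ of constant degree $d$. Since $A$ is a division algebra of degree $p^n$, we must have $d^m = p^n$, so $m = p^j$ and $d = p^{n-j\cdot(\text{something})}$ — more precisely $m$ and $d$ are powers of $p$ with $\log_p d \cdot j$-relation forcing $d = p^a$, $m = p^b$, $ab$-arithmetic giving $ab \geq 1$ (we need $m > 1$ for nontriviality, so $b \geq 1$ and hence $a \le n-1$, i.e. $d \le p^{n-1}$). The key geometric input is the twisted Segre embedding $\phi_{L/F}^B : \mo{tr}_{L/F} X(B) \to X(\mo{cor}_{L/F} B) = X(A) = X$, whose image $X_{L/F}(B)$ is a closed subvariety of $X$ whose degree, by lemma \ref{degree_lemma}, is the multinomial coefficient $\binom{p^b(p^a-1)}{p^a - 1, \ldots, p^a - 1}$ (with $p^b$ equal entries $p^a - 1$).

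Next I would compute the codimension $k$ of this cycle $[X_{L/F}(B)] \in CH^k(X)$ and the $p$-adic valuation of its degree. The dimension of $\mo{tr}_{L/F} X(B)$ is $m(d-1) = p^b(p^a - 1)$, while $\dim X = p^n - 1$, so $k = p^n - 1 - p^b(p^a-1) = p^n - p^{a+b} + p^b - 1$. Using Kummer's theorem (the $p$-adic valuation of a multinomial coefficient equals the number of carries when the parts are added in base $p$), I would compute $v_p(\deg X_{L/F}(B))$ exactly; this should come out to be a modest function of $a$ and $b$. On the other side, lemma \ref{karpenko_lemma} says that for \emph{any} cycle $Z \in CH^k(X)$ — in particular for $Z = [X_{L/F}(B)]$, and this persists after a prime-to-$p$ extension, which matters because the degree of $B$ might only become constant and the twist might only trivialize after such an extension — the valuation $v_p(\deg Z)$ is at least $\min(\{i + n - v_p(k-i) : 0 \le i \le k-1\} \cup \{k\})$. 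The contradiction will come from showing that the Kummer-theorem value is strictly smaller than Karpenko's lower bound whenever $n < p^2$; the hypothesis $n < p^2$ is exactly what is needed to make $i + n - v_p(k-i)$ large (since $v_p(k-i) < n$ forces $i + n - v_p(k-i) > i$, and the base-$p$ digits of $k$ in the relevant range keep the minimum up near $n$), while the carry count for the multinomial stays bounded by roughly $b$ or $2b \le$ something like $n/a$.

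The main obstacle I expect is the base-$p$ digit bookkeeping: getting a clean closed form (or at least a clean upper bound) for $v_p\binom{p^b(p^a-1)}{p^a-1,\ldots,p^a-1}$ via Kummer's theorem, and matching it against $\min_i(i + n - v_p(k - i))$ for the specific $k = p^n - p^{a+b} + p^b - 1$. One has to be careful that the minimum in Karpenko's set could in principle be achieved at the $\{k\}$ term or at small $i$, so I would split into cases according to the size of $a$ and $b$ relative to $n$ and $p$. A secondary technical point is justifying the reduction to $B$ of constant degree and the passage to a prime-to-$p$ closure so that lemma \ref{karpenko_lemma} applies verbatim; since that lemma explicitly includes the ``remains true after a prime-to-$p$ extension'' clause, this should be routine. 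Once the inequality $v_p(\deg X_{L/F}(B)) < \min(\cdots)$ is established under $n < p^2$, it directly contradicts lemma \ref{karpenko_lemma}, proving no such $L$ and $B$ can exist.
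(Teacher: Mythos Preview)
Your overall strategy matches the paper's: form the cycle $[X_{L/F}(B)] \in CH^k(X(A))$ via the twisted Segre embedding, compute the $p$-adic valuation of its degree from lemma~\ref{degree_lemma}, and contradict lemma~\ref{karpenko_lemma}. Two points of divergence are worth flagging.

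First, your reductions are not quite right. From $d^m = p^n$ you only get $d = p^a$ and $am = n$; nothing forces $m$ to be a power of $p$. The paper handles this (and the non-constant-degree issue you worry about) by first passing to a prime-to-$p$ closure of $F$, so that every separable field extension has $p$-power degree, and then invoking Karpenko's indecomposability theorem for the generic algebra: if $L$ had more than one factor, lemma~\ref{cor_product} would make $A$ a tensor product, contradicting indecomposability. Hence $L$ is a \emph{field}, $[L:F] = p^s$, $\deg B = p^r$, and $n = r p^s$. This is where the hypothesis $n < p^2$ actually enters: it forces $s = 1$. Your remark that $n < p^2$ is ``exactly what is needed to make $i + n - v_p(k-i)$ large'' is not how the bound is used; once $s = 1$, the Karpenko inequalities are checked uniformly in $r$ without further appeal to $n < p^2$.

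Second, with $s = 1$ the valuation computation is clean via lemmas~\ref{val_power} and~\ref{val_misc} (or your Kummer's theorem, which gives the same answer): one gets $v_p(\deg X_{L/F}(B)) = r(p-1)$. Your planned case analysis ``according to the size of $a$ and $b$ relative to $n$ and $p$'' is therefore unnecessary; there is only the case $b = 1$, and the inequality check against $\min(\{i + n - v_p(k - i)\} \cup \{k\})$ becomes a direct elementary verification rather than the open-ended digit bookkeeping you anticipate.
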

\begin{proof}
Suppose we have an algebra $B$ as above, and let $X = X(A)$ be the
Severi-Brauer variety of $A$. We claim that there exists some
cycle in $X(A)$ which contradicts lemma \ref{karpenko_lemma}, therefore
giving a contradiction. Since this would not be changed by prime to $p$
extensions, we may assume that $F$ has no separable field extensions whose
degree is not a power of $p$. Since it follows directly from
\cite{Kar:C2ind}, theorem 3.1 that $A$ is indecomposable, lemma
\ref{cor_product} implies that the extension $L$ must in fact be a field
and not just an \'etale algebra.  Consequently, the algebra $B$ has
constant rank $p^r$, and $[L:K] = p^s$ for some $r, s$. We therefore
also have $p^n = \mo{deg} A = (p^r)^{p^s} = p^{rp^s}$. By assumption, $n
< p^2$ implies that $s = 1$.

By lemma \ref{degree_lemma} and lemmas \ref{val_power}, \ref{val_misc},
the variety $\mo{tr}_{L/F} X(B)$ embeds as a subvariety in $X$ with
degree satisfying $v_p(\mo{deg} X) = rp - r$ and of codimension $p^{rp}
- p^r - p - 1$. To complete our argument by contradiction, we must now
show that the inequality implied by lemma \ref{karpenko_lemma} fails.
That is, we must show:
\begin{align*}
v_p(\mo{deg} X) = rp - r &< i + rp -v_p(p^{rp} - p^r - p - 1 - i), \\
rp - r &< p^{rp} - p^r - p - 1
\end{align*}
for $i$ between $0$ and $p^{rp} - p^r - p - 2$. For the second
inequality, by grouping terms and rewriting, we see that it is
equivalent to 
\[p^r(p^{r(p-1) - 1} > (p-1)(r+1) + 2\]
and $r(p-1) > 1$ (since $p$ is an odd prime) implies $p^{r(p-1)} - 1 >
p-1$, and so it is enough to show
\[p^r \geq r + 1 + \frac{2}{p-1}\]
for which it suffices to show $p^r \geq r+2$. But this is easy to check
(note that it holds for $r = 1$ and induction on $r$ implies quickly
that it holds for all $r$).

For the first inequality, we must show:
\[v_p(p^{rp} - p^r - p - 1 - i < r + i\]
If $i < p^r + p + 1$, then it follows that $v_p(p^{rp} - p^r - p - 1 -
i) < r$, and the inequality follows. Otherwise, $i \geq p^r + p + 1$ and
since $v_p(p^{rp} - p^r - p - 1 - i)$ is always less than $rp$, we must
only show $rp < r + p^r + p + 1$. We start by showing that $p^r \geq rp$
for all $r \geq 1$ by induction on $r$. For the induction step, we have:
\begin{multline*}
p^{r+1} = p^r + (p^{r+1} - p^r) = p^r + p^r(p - 1) \geq rp + rp(p-1)
> rp + p = (r+1)p
\end{multline*}
Therefore the inequality may be proved by showing $p + r + 1 > 0$, and
we are done.
\end{proof}

\section{Splitting fields of separable algebras}

\begin{lem} \label{splitting_lem}
Suppose $A$ is Azumaya over $L$ with $L/F$ is \'etale, and let $E/F$ be
another \'etale extension. Then $A \otimes_F E$ is a split algebra over
$L \otimes_F E$ if and only if the variety $\mo{tr}_{L/F} X(A)$ has an
$E$ point.
\end{lem}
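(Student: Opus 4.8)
The plan is to unwind the definitions of $X(A)$, $\mo{tr}_{L/F}$, and the corestriction, and then use the adjointness property of the transfer — exactly as in the proof of Proposition \ref{a_priori_split} — to translate ``$\mo{tr}_{L/F} X(A)$ has an $E$-point'' into a statement about $X(A)$ having an $L\otimes_F E$-point, which in turn is equivalent to $A\otimes_F E$ being split over $L\otimes_F E$. The key input is that for a finite separable extension $F\subset E$, the adjunction \[\mo{Mor}_F(\mo{Spec} E,\ \mo{tr}_{L/F} X(A)) = \mo{Mor}_L(\mo{Spec}(L\otimes_F E),\ X(A))\] holds; this is the statement quoted from \cite{Serre:TG} and already used above. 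So an $E$-point of $\mo{tr}_{L/F}X(A)$ is the same thing as an $(L\otimes_F E)$-point of $X(A)$.

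The second ingredient is the standard fact that for an Azumaya algebra $B$ over a commutative ring $R$, the Severi-Brauer scheme $X(B)$ has an $R$-point if and only if $B$ is split over $R$, i.e. $B\cong \mo{End}_R(P)$ for some projective $R$-module $P$ of the appropriate rank; an $R$-point corresponds to a rank-$\mo{deg} B$ right ideal that is locally a direct summand, and such an ideal splits off $B$ as a matrix algebra (see \cite{Gro:GB1}). Applying this with $R = L\otimes_F E$ and $B = A\otimes_L(L\otimes_F E) = A\otimes_F E$ gives that $X(A)$ has an $(L\otimes_F E)$-point precisely when $A\otimes_F E$ is split as an $L\otimes_F E$-algebra. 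Chaining the two equivalences yields the lemma.

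Concretely, the steps are: (i) observe $\mo{tr}_{L/F}X(A)$ has an $E$-point iff $\mo{Mor}_F(\mo{Spec} E, \mo{tr}_{L/F}X(A))\neq\emptyset$; (ii) apply the transfer-restriction adjunction to rewrite this as $\mo{Mor}_L(\mo{Spec}(L\otimes_F E), X(A))\neq\emptyset$; (iii) invoke the Severi-Brauer/splitting dictionary over the base ring $L\otimes_F E$ to conclude this is equivalent to $A\otimes_F E$ being split over $L\otimes_F E$. Since all three are biconditionals, the ``if and only if'' follows; the proof is essentially the argument of Proposition \ref{a_priori_split} run in reverse, without the need to produce a specific maximal subfield.

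I do not expect a genuine obstacle here — the content is entirely in the adjointness of the transfer, which is cited, and in the Severi-Brauer functor of points, which is standard. The only point requiring a touch of care is making sure the Severi-Brauer scheme $X(A)$ as defined in Section \ref{prelim} (parametrizing right ideals $I$ with $I_p$ of rank $\mo{deg} A_p$ at each $p\in\mo{Spec} L$) really does have the property that an $R$-point for a commutative $L$-algebra $R$ corresponds to a splitting $A\otimes_L R\cong\mo{End}_R(P)$; this is exactly the content of \cite{Gro:GB1} in the non-constant-rank setting, so one simply cites it. One should also note $L\otimes_F E$ is again étale over $F$ (hence a finite product of fields), so the notion of ``split'' is unambiguous and compatible with the component-wise description.
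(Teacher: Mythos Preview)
Your proposal is correct and follows essentially the same approach as the paper: invoke the transfer--restriction adjunction from \cite{Serre:TG} to identify $E$-points of $\mo{tr}_{L/F} X(A)$ with $(L\otimes_F E)$-points of $X(A)$, then use that such a point is equivalent to $A\otimes_F E$ being split. If anything, you are slightly more careful in noting that each step is a biconditional, whereas the paper's proof is phrased only in the ``$E$-point $\Rightarrow$ split'' direction.
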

\begin{proof}
By the adjointness property of the transfer (see \cite{Serre:TG}), we
have 
\[Hom_F(\mo{Spec} E, \mo{tr}_{L/F} X(A)) = Hom_L(\mo{Spec} (E \otimes_F
L), X(A))\]
but this in turn implies that 
\[X(A) \times_{\mo{Spec} L} \mo{Spec} (L \otimes_F E) = X(A \otimes_F
E)\] has an $L \otimes_F E$ point, which implies $A \otimes_F E$ is
split.
\end{proof}

\begin{thm} \label{general_thm}
Let $L/F$ an \'etale extension of dimension $m$ and and $A$ an Azumaya
algebra over $L$ of degree $(d_1, \ldots, d_m)$ (see definition
\ref{deg_def}). Let $I$ be the index of $\mo{cor}_{L/F} A$ and let $P$
be its period. Write $\sum d_i - m = qI + r$ for positive integers $q,
r$ with $r < I$. Then there exists an \'etale extension $E/F$ of degree
\[\frac{(d_1 + \cdots + d_m - m)!}{(d_1 - 1)! \cdots (d_m - 1)!} \,
P^r\]
such that $A \otimes_F E$ is split as an $L \otimes_F E$ algebra.
\end{thm}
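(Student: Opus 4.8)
The plan is to invoke Lemma~\ref{splitting_lem}: it suffices to construct an \'etale $F$-algebra $E$ of the stated degree admitting a morphism $\mo{Spec} E \to \mo{tr}_{L/F} X(A)$, for then $A \otimes_F E$ is automatically split over $L \otimes_F E$. Write $B = \mo{cor}_{L/F} A$; since corestriction multiplies degrees, $B$ is central simple of constant degree $n = d_1 \cdots d_m$, with index $I$ and period $P$. Via the closed embedding $\phi_{L/F}^A$ I will regard $Y := \mo{tr}_{L/F} X(A) = X_{L/F}(A)$ as a smooth closed subvariety of the Severi--Brauer variety $X(B)$; by Lemma~\ref{degree_lemma} it has dimension $d := \sum d_i - m$, and $Y_{\ov F}$ has degree $D := (\sum d_i - m)!\,/\,\prod_i (d_i-1)!$ inside $X(B)_{\ov F} = \bP^{n-1}_{\ov F}$. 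So the goal reduces to producing a reduced $0$-dimensional closed subscheme $Z \subset Y$ with $\dim_F \Gamma(Z,\cO_Z) = D\,P^{r}$; then $E := \Gamma(Z,\cO_Z)$ is \'etale of that degree and $Z = \mo{Spec} E \hra Y$ is the desired morphism.

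To build $Z$ I would cut $Y$ down in two stages. \emph{Stage one}, degree-preserving $F$-rational linear sections: for a central simple $F$-algebra $C$ of index $I$ and $0 < jI < \mo{deg} C$, the generalized Severi--Brauer variety of right ideals of reduced dimension $\mo{deg} C - jI$ has an $F$-point (as $I \mid \mo{deg} C$), and such an ideal $J$ cuts out a closed $\Lambda_J \subset X(C)$ --- the rank-one ideals contained in $J$ --- which over $\ov F$ is a linear subspace of codimension $jI$ and which is itself the Severi--Brauer variety $X(\mo{End}_C J)$ of an algebra of degree $\mo{deg} C - jI$ in the Brauer class of $C$. Using that these generalized Severi--Brauer varieties are $F$-rational once they carry a rational point, I would choose successively, for $i = 1, \dots, q$, a codimension-$I$ subvariety $\Lambda_i$ in general position inside the ambient Severi--Brauer variety $W_{i-1}$ of $Y_{i-1}$ (where $W_0 = X(B)$, $Y_0 = Y$), and set $W_i = \Lambda_i$, $Y_i = Y_{i-1} \cap W_i$. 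Each step drops the dimension by exactly $I$ and, being a general linear section, preserves the $\ov F$-degree; the easily checked inequality $d \le n-1$ guarantees $Y_i$ still fits inside $W_i$. After $q$ steps one reaches $Y_q \subset W_q = X(B')$ with $\dim Y_q = d - qI = r$, with $\ov F$-degree of $Y_q$ still $D$, and with $B'$ in the class of $B$, hence $\mo{per} B' = P$. (If $q = 0$ this stage is vacuous.)

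\emph{Stage two}, sections in the $\cO(P)$-embedding: by the classical computation of the Picard group of a Severi--Brauer variety, $\mo{Pic}(X(B'))$ maps onto $P\Z \subset \Z = \mo{Pic}(X(B')_{\ov F})$, so $\cO(P)$ descends to a very ample line bundle on $X(B')$ over $F$, whose complete linear system embeds $X(B') \hra \bP^M_F$. I would then cut $Y_q$ by $r$ hyperplanes of $\bP^M$ in general position; by a Bertini-type argument the result $Z$ is reduced, smooth and $0$-dimensional, hence \'etale over $F$, and its length is the $\cO(P)$-degree of $Y_q$, which is $P^{\dim Y_q} = P^{r}$ times the $\ov F$-degree $D$, i.e.\ $D P^{r}$. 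Taking $E = \Gamma(Z,\cO_Z)$ completes the construction, and $[E:F] = D P^r$ is precisely the stated bound by Lemma~\ref{degree_lemma}.

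The crux of the argument, and the main obstacle, is getting the exponent of $P$ right: applying the $\cO(P)$-embedding to all of $Y$ and cutting by $d$ hyperplanes would only yield the weaker exponent $P^{d}$ in place of $P^{r}$, so one must first strip off $d - r$ dimensions with honest, degree-preserving $F$-rational linear sections --- which is exactly where the hypothesis that $I$ divides $n$ enters, via the $F$-rational points on the generalized Severi--Brauer varieties of stage one --- and only switch to the $\cO(P)$-embedding for the final $r$-dimensional stretch, where rational hyperplanes are unavailable. The secondary technical points are the $F$-rationality of the relevant generalized Severi--Brauer varieties (so that the stage-one linear subspaces can genuinely be chosen in general position over $F$) and the validity of Bertini's theorem over the ground field, which in small characteristic or over finite fields require the usual refinements.
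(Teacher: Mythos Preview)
Your proof is correct and follows essentially the same route as the paper: embed $\mo{tr}_{L/F} X(A)$ into $X(\mo{cor}_{L/F} A)$ via $\phi_{L/F}^A$, use the index to produce an $F$-rational linear subspace of codimension $qI$, use the period to produce divisors in the class $P[H]$ for the remaining $r$ dimensions, intersect, and apply Lemma~\ref{splitting_lem}. Your two-stage presentation (iterated codimension-$I$ sub--Severi--Brauer varieties, then hyperplanes in the $\mc O(P)$-reembedding) is a mild repackaging of the paper's single intersection with $Z^{qI}$ and $D^r$, and you are more explicit than the paper about the general-position and Bertini issues.
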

\begin{proof}
Let $X = \mo{tr}_{L/F} X(A)$ and let $Y = X(\mo{cor}_{L/F} A)$. Since
$\mo{cor}_{L/F} A$ has index $I$, there exist cycles $Z^{qI} \subset Y$
such that $Z^{qI}_{\ov F}$ is isomorphic to a linear projective subspace
of $\bP^{d_1 \cdots d_m - 1} = Y_{\ov F}$ of codimension $qI$ (see
\cite{Ar:BS}). Also by \cite{Ar:BS}, there exists a
divisor $D \subset Y$ such that $\ov D = D_{\ov F}$ is in the class $P
[H]$ where $H$ is a hyperplane in $\mbb P^{d_1 \cdots d_m - 1}$. In
particular, intersecting a general subspace of the form $Z^{qI}$ with
one of the form $D^r$ will intersect $X$ in a subscheme $C \cong
\mo{Spec}(E)$, where $E/F$ is an \'etale extension of degree $(\mo{deg}
X) P^r$.

In particular, this means $X$ has an $E$ point our conclusion follows
from lemma \ref{splitting_lem}.
\end{proof}

\begin{thm} \label{main_thm}
Let $p$ be a prime number, $L/F$ an \'etale extension of degree $p^k$
and $A$ an Azumaya algebra over $L$ of constant degree $p^n$ such that
$\mo{cor}_{L/F} A$ has index dividing $p^k$. Then there exists an
\'etale extension $E/F$ of degree $p^{n(p^k - 1)}m$ where $m$ is
relatively prime to $p$ such that $A \otimes_F E$ is split as an $L
\otimes_F E$ algebra.
\end{thm}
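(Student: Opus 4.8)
The plan is to obtain this as a direct consequence of Theorem \ref{general_thm}, specialized to the case of constant prime-power degree, followed by an elementary $p$-adic evaluation of the multinomial coefficient that arises.

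First I would apply Theorem \ref{general_thm} with $m = p^k$ and $d_1 = \cdots = d_{p^k} = p^n$; this is legitimate because $A$ has constant degree $p^n$ over $L$, so after base change to $\ov F$ each of the $p^k$ factors of $A \otimes_F \ov F$ has degree $p^n$, i.e.\ $\mo{deg} A = (p^n,\ldots,p^n)$ in the sense of Definition \ref{deg_def}. Write $I = \mo{ind}(\mo{cor}_{L/F} A)$ and $P = \mo{per}(\mo{cor}_{L/F} A)$. Since $I \mid p^k$ by hypothesis, $I$ is a power of $p$, and then $\sum d_i - m = p^k(p^n - 1)$ is divisible by $I$; hence the remainder $r$ upon dividing $\sum d_i - m$ by $I$ is zero, and the factor $P^r$ appearing in Theorem \ref{general_thm} equals $1$. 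Thus the \'etale extension $E/F$ it produces has degree exactly
\[
\binom{p^k(p^n-1)}{p^n-1,\,\ldots,\,p^n-1} = \frac{\bigl(p^k(p^n-1)\bigr)!}{\bigl((p^n-1)!\bigr)^{p^k}}.
\]

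It then remains only to check that this integer has the form $p^{n(p^k-1)}m$ with $p \nmid m$, i.e.\ that its $p$-adic valuation equals $n(p^k-1)$. Here I would use Legendre's formula $v_p(N!) = (N - s_p(N))/(p-1)$, where $s_p$ denotes the sum of the base-$p$ digits, to obtain
\[
v_p\!\left(\frac{\bigl(p^k(p^n-1)\bigr)!}{\bigl((p^n-1)!\bigr)^{p^k}}\right) = \frac{p^k\, s_p(p^n-1) - s_p\bigl(p^k(p^n-1)\bigr)}{p-1}.
\]
Since $p^n - 1$ has base-$p$ expansion $(p-1,\ldots,p-1)$ with $n$ digits, $s_p(p^n-1) = n(p-1)$; and $p^k(p^n-1)$ is the same digit string followed by $k$ zeros, so $s_p(p^k(p^n-1)) = n(p-1)$ as well. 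Substituting yields valuation $n(p^k-1)$, exactly as needed. (Equivalently, this is Kummer's count of the carries that occur when $p^k$ copies of $p^n-1$ are added in base $p$.) The explicit value of $m$ recorded in Remark \ref{what_is_m} then drops out immediately.

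I do not expect a serious obstacle here. The only points that require any care are the observation that $I$ being a power of $p$ forces the remainder $r$ to vanish --- so that the potentially $p$-divisible factor $P^r$ disappears and the stated degree becomes purely combinatorial --- together with the digit-sum identity above; both are routine, and all of the geometric input has already been supplied by Theorem \ref{general_thm}.
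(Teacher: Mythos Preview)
Your proposal is correct and follows essentially the same line as the paper: the paper's proof of Theorem \ref{main_thm} simply reproduces the geometric argument of Theorem \ref{general_thm} in this special case (rather than citing it) and then computes the $p$-adic valuation of the multinomial coefficient via its own counting lemmas (\ref{val_power}, \ref{val_misc}) instead of Legendre's digit-sum formula. Your observation that $I \mid p^k \mid p^k(p^n-1)$ forces $r=0$, eliminating the $P^r$ factor, is exactly the specialization needed, and your valuation computation is equivalent to (and a bit slicker than) the paper's.
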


\begin{proof}[proof of theorem \ref{main_thm}]
Let $X = \mo{tr}_{L/F} X(A)$ and let $Y = X(\mo{cor}_{L/F} A)$. Since
$\mo{cor}_{L/F} A$ has index $p^k$, there exist cycles $Z^{rp^k} \subset
Y$ such that $Z^{rp^k}_{\ov F}$ is isomorphic to a linear projective
subspace of $\bP^{(p^n)^{p^k} - 1}$ of codimension $rp^k$ for any $r$
(see \cite{Ar:BS}). In particular, a general subspace of the form
$Z^{p^{k+n} - p^k}$ will intersect $X$ in a subscheme $C \cong
\mo{Spec}(E)$, where $E/F$ is an \'etale extension of degree $\mo{deg}
X$.

By lemma \ref{splitting_lem} We therefore need only compute the
$p$-adic valuation of $\mo{deg} X$ to complete the proof. Using lemmas
\ref{val_power} and \ref{val_misc}, we have:
\[v_p((p^{k+n} - p^k)!) = v_p(p^{k+n}!) - v_p(p^k!) - n = \frac{p^{k+n}
- p^k}{p-1} - n\]
\[v_p((p^n-1)!) = v_p(p^n!) - v_p(p^n) = \frac{p^n - 1}{p-1} - n\]
and so using lemma \ref{degree_lemma}, we have:
\[v_p(\mo{deg} X) = v_p \binom{p^{k+n} - p^k}{p^n-1, \ldots, p^n-1}
=\frac{p^{k+n} - p^k}{p-1} - n - p^k(\frac{p^n - 1}{p-1} - n) 
=n(p^k - 1) \]
as desired.
\end{proof}

In the case that $L/F$ is a split \'etale extension, this gives the
following corollary:
\begin{cor} \label{main_cor}
Let $p$ be a prime number, and let $A_1, \ldots, A_{p^k}$ be central
simple algebras of degree $p^n$ over a field $F$ such that $A_1 \otimes
\cdots \otimes A_{p^k}$ has index dividing $p^k$. Then there exists an
\'etale extension $E/F$ of degree $p^{n(p^k-1)}m$ where $m$ is
relatively prime to $p$ such that $E$ is a splits each of the algebras
$A_1, \ldots, A_{p^k}$.
\end{cor}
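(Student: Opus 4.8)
The plan is to derive this corollary directly from Theorem~\ref{main_thm} by realizing the family of central simple algebras $A_1, \ldots, A_{p^k}$ as a single Azumaya algebra over a split \'etale extension. Specifically, let $L = F \times \cdots \times F$ be the split \'etale extension of $F$ of degree $p^k$, and set $A = A_1 \times \cdots \times A_{p^k}$, regarded as an Azumaya algebra over $L$ of constant degree $p^n$. By Lemma~\ref{cor_product} we have $\mo{cor}_{L/F} A = A_1 \otimes \cdots \otimes A_{p^k}$, so the hypothesis that this tensor product has index dividing $p^k$ is exactly the hypothesis required to apply Theorem~\ref{main_thm}.

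Applying Theorem~\ref{main_thm} to this $L$ and $A$, we obtain an \'etale extension $E/F$ of degree $p^{n(p^k-1)}m$ with $m$ prime to $p$ such that $A \otimes_F E$ is split as an $L \otimes_F E$ algebra. The remaining step is to translate ``$A \otimes_F E$ is split over $L \otimes_F E$'' back into a statement about the individual $A_i$. Since $L \otimes_F E = E \times \cdots \times E$ (the split \'etale $E$-algebra of degree $p^k$) and $A \otimes_F E = (A_1 \otimes_F E) \times \cdots \times (A_{p^k} \otimes_F E)$ under this decomposition, saying that $A \otimes_F E$ is split over $L \otimes_F E$ is precisely saying that each $A_i \otimes_F E$ is split over $E$, i.e.\ that $E$ splits each $A_i$.

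I do not expect any serious obstacle here: the content of the corollary is entirely contained in Theorem~\ref{main_thm} together with Lemma~\ref{cor_product}, and the only work is the bookkeeping that identifies a split Azumaya algebra over a split \'etale extension with a tuple of central simple algebras over $F$, and correspondingly identifies ``split after base change'' in the two languages. The mildest point requiring care is simply checking that the degree and index hypotheses match up across this identification, which is immediate from the definitions and from Lemma~\ref{cor_product}.

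\begin{proof}
Let $L = F \times \cdots \times F$ ($p^k$ factors), a split \'etale extension of $F$ of degree $p^k$, and let $A = A_1 \times \cdots \times A_{p^k}$, an Azumaya algebra over $L$ of constant degree $p^n$. By Lemma~\ref{cor_product}, $\mo{cor}_{L/F} A = A_1 \otimes \cdots \otimes A_{p^k}$, which by hypothesis has index dividing $p^k$. Theorem~\ref{main_thm} then provides an \'etale extension $E/F$ of degree $p^{n(p^k-1)}m$ with $m$ relatively prime to $p$ such that $A \otimes_F E$ is split over $L \otimes_F E$. But $L \otimes_F E = E \times \cdots \times E$, and under this decomposition $A \otimes_F E = (A_1 \otimes_F E) \times \cdots \times (A_{p^k} \otimes_F E)$, so $A \otimes_F E$ being split over $L \otimes_F E$ means exactly that each $A_i \otimes_F E$ is split over $E$. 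Thus $E$ splits each of $A_1, \ldots, A_{p^k}$.
\end{proof}
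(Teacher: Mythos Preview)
Your proposal is correct and matches the paper's approach exactly: the paper presents this corollary as an immediate consequence of Theorem~\ref{main_thm} in the case of a split \'etale extension, invoking Lemma~\ref{cor_product} to identify $\mo{cor}_{L/F} A$ with $A_1 \otimes \cdots \otimes A_{p^k}$, and gives no further argument. Your write-up simply spells out this identification and the unpacking of the conclusion.
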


\begin{cor} \label{quat_case}
Suppose $Q_1, Q_2$ are quaternion algebras over $F$ and $Q_1 \otimes
Q_2$ has index $2$. Then there is a common quadratic splitting extension
for $Q_1$ and $Q_2$.
\end{cor}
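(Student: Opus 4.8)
The plan is to deduce this as the special case $p = 2$, $n = 1$, $k = 1$ of Corollary~\ref{main_cor}. With these values, $A_1 = Q_1$ and $A_2 = Q_2$ are central simple $F$-algebras of degree $p^n = 2$, and the hypothesis that $Q_1 \otimes Q_2$ has index $2$ is exactly the hypothesis of that corollary, namely that $A_1 \otimes \cdots \otimes A_{p^k} = Q_1 \otimes Q_2$ has index dividing $p^k = 2$. Corollary~\ref{main_cor} then produces an \'etale extension $E/F$ of degree $p^{n(p^k - 1)} m = 2^{1} m = 2m$ with $m$ relatively prime to $2$ such that $E$ splits both $Q_1$ and $Q_2$.

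Next I would pin down the odd factor $m$ using the explicit formula recorded in Remark~\ref{what_is_m}, which gives
\[m = \frac{(p^k(p^n - 1))!}{((p^n - 1)!)^{p^k}\, p^{n(p^k - 1)}} = \frac{(2 \cdot 1)!}{(1!)^{2}\, 2^{1}} = \frac{2}{2} = 1,\]
so in fact $[E:F] = 2$: the extension $E$ is a degree $2$ \'etale $F$-algebra that splits both quaternion algebras.

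Finally I would upgrade ``\'etale extension of degree $2$'' to ``quadratic field extension.'' A degree $2$ \'etale $F$-algebra is either the split algebra $F \times F$ or a separable quadratic field extension of $F$. If $E \cong F \times F$, then $Q_i \otimes_F E \cong Q_i \times Q_i$, so the fact that $E$ splits $Q_i$ forces $Q_i$ itself to be split, for $i = 1, 2$; but then $Q_1 \otimes Q_2$ would be split, contradicting the assumption that its index is $2$. Hence $E$ must be a separable quadratic field extension of $F$, and it is the desired common quadratic splitting field for $Q_1$ and $Q_2$.

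Since the whole argument is a direct specialization of Corollary~\ref{main_cor} together with a one-line evaluation of $m$, there is essentially no hard step; the only point requiring care is the last one, namely checking that the \'etale extension supplied by the corollary cannot be the split algebra $F \times F$ under the standing hypothesis that $\mo{ind}(Q_1 \otimes Q_2) = 2$, so that one genuinely obtains a field.
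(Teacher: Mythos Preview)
Your proof is correct and follows essentially the same route as the paper: specialize to $p=2$, $n=k=1$, and use Remark~\ref{what_is_m} to compute $m=1$. The paper cites Theorem~\ref{main_thm} directly while you go through Corollary~\ref{main_cor}, but these amount to the same thing here; you are also a bit more careful than the paper in explicitly ruling out the split \'etale algebra $E\cong F\times F$ using the hypothesis $\mo{ind}(Q_1\otimes Q_2)=2$, a point the paper leaves implicit.
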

\begin{proof}
This follows immediately from theorem \ref{main_thm} and that in this
case $m = 1$ by remark \ref{what_is_m}.
\end{proof}

As a corollary, we present a proof of a result which is known to the
experts, but the present context provides a convenient method of proof:

\begin{prop}
Suppose $Q$ is a quaternion algebra, and $A$ is a division algebra of
degree $2m$ such that $A \otimes Q$ is not division. Then there is a
maximal subfield of $A$ which also splits $Q$.
\end{prop}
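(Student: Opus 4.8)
The plan is to regard the pair $(A,Q)$ as a single Azumaya algebra over the split \'etale extension $F \times F$, push it through the twisted Segre embedding, and then cut the resulting variety down with a linear cycle coming from the hypothesis that $A \otimes Q$ is not division. Set $L = F \times F$ and $B = A \times Q$, an Azumaya algebra over $L$; then $\mo{cor}_{L/F} B = A \otimes Q$ by lemma \ref{cor_product}, and $\mo{tr}_{L/F} X(B) = X(A) \times_F X(Q)$. Let $Y = X_{L/F}(B)$ be the image of the closed embedding $\phi_{L/F}^B$ inside $X(A \otimes Q)$, which over $\ov F$ is $\bP^{4m-1}$. Then $\dim Y = (2m-1) + 1 = 2m$, and by lemma \ref{degree_lemma} (with the two components having degrees $2m$ and $2$) $\mo{deg} Y = \binom{2m}{2m-1,\,1} = 2m$.

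Next comes the key arithmetic remark about $I := \mo{ind}(A \otimes Q)$. Since $A \otimes Q$ is not division, $I$ is a proper divisor of $4m$, hence $I \le 2m$; and since $[A] = [A \otimes Q] + [Q]$ in $\Br(F)$, the index $\mo{ind}(A) = 2m$ divides $\mo{ind}(A \otimes Q)\cdot\mo{ind}(Q) = 2I$, so $m \mid I$. Hence $I \in \{m,2m\}$, and in particular $I$ divides $\dim Y = 2m$. This puts us in the situation ``$r = 0$'' of theorem \ref{general_thm}: by \cite{Ar:BS} there is a cycle $Z \subset X(A \otimes Q)$ with $Z_{\ov F}$ a linear subspace of codimension $2m$ (a multiple of $I$), and, exactly as in the proof of that theorem, a general such $Z$ meets $Y$ in a zero-dimensional subscheme $\mo{Spec}(E)$ with $E/F$ \'etale of degree $\mo{deg} Y = 2m$.

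To conclude, write $E = \prod_j K_j$ according to the closed points $z_j$ of $Y$, so $\sum_j [K_j : F] = 2m$. Each $z_j$ is a $K_j$-point of $Y \cong X(A) \times_F X(Q)$; projecting to $X(A)$ produces a $K_j$-point of $X(A)$, so $A \otimes_F K_j$ is split and therefore $2m = \mo{ind}(A)$ divides $[K_j : F]$. As each $[K_j : F] \ge 2m$ and they sum to $2m$, there is only one factor; let $K$ be its residue field, so $[K : F] = 2m = \mo{deg} A$. A field extension of $F$ that splits $A$ and has degree $\mo{deg} A$ over $F$ is a maximal subfield of $A$ (as $A$ is division), so $K$ is a maximal subfield of $A$; and projecting the $K$-point $z$ of $Y$ to the second factor $X(Q)$ shows that $Q \otimes_F K$ is split, i.e.\ $K$ splits $Q$.

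The only input beyond routine bookkeeping is the numerical fact $I \in \{m,2m\}$, which makes the ``period'' contribution $P^r$ of theorem \ref{general_thm} disappear; after that the argument is the construction of that theorem together with the elementary observation that every closed point of $X(A) \times_F X(Q)$ has degree divisible by $\mo{ind}(A)$. The supporting intersection-theoretic statement --- that a general codimension-$2m$ linear cycle on $X(A \otimes Q)$ meets $Y$ in $\mo{deg} Y$ points --- is used verbatim as in the proof of theorem \ref{general_thm}, so no new difficulty arises there.
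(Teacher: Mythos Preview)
Your proof is correct and follows the same route as the paper's: set up $B = A \times Q$ over $L = F \times F$, use the twisted Segre embedding into $X(A \otimes Q)$, and slice with a linear cycle of codimension $2m$. You actually supply two steps the paper leaves implicit or omits: the verification that $\mo{ind}(A \otimes Q) \mid 2m$ (via $m \mid I$ from $2m = \mo{ind}(A) \mid 2I$, together with $I$ a proper divisor of $4m$), and the argument that the resulting \'etale $E$ of degree $2m$ must in fact be a single field because every closed point of $X(A) \times_F X(Q)$ has residue degree divisible by $\mo{ind}(A) = 2m$ --- the paper simply asserts ``splitting field'' at the end without this check.
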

\begin{proof}
Let $L = F \times F$ be split quadratic \'etale, and $B = Q \times A$ as
an $L$ algebra. Note that $\mo{cor}_{L/F} B = Q \otimes A$ by lemma
\ref{cor_product}.  Let $X = X(B)$, $Y = X(Q \otimes A)$. By section
\ref{prelim}, we know that $X$ has dimension $2m$ and degree
$\binom{2m}{1, 2m-1} = 2m$.  Since $\mo{ind}(Q \otimes A) | 2m$, we have
a $Z \subset Y$ a form of a linear subspace of codimension $2m$. By
intersecting $Z$ with $X$, we obtain a $0$-dimensional subscheme of
$2m$. Using lemma \ref{splitting_lem}, we therefore obtain a splitting
field of degree $2m$.
\end{proof}

\section{Counterexamples}

The following lemma will be essential for the construction of
counterexamples. In its statement we will use the following notational
convention: if $A$ is a central simple $F$ algebra of degree $d$, and
$i$ is any integer, we will let $A^i$ denote the algebra of degree $d$
which is Brauer equivalent to $A^{\otimes i}$. This algebra is unique up
to isomorphism.
\begin{lem} \label{example_lem}
Let $d, n$ be positive integers. Then there exists a field $F$ and
central simple $F$ algebras $A_1, \ldots, A_n$ of degree $d$ such that
for any $n$-tuple $i_1, \ldots, i_n$ with $i_k$ relatively prime to $d$
for each $k$ the algebra $A_1^{i_1} \otimes \cdots \otimes A_n^{i_n}$ is
a division algebra.
\end{lem}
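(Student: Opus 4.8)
The plan is to build the field $F$ by an iterated Laurent series construction and take the $A_j$ to be symbol algebras, so that every tensor product appearing in the statement becomes a \emph{totally ramified} division algebra with respect to a natural Henselian valuation. First I would fix a ground field $k$ of characteristic $0$ containing a primitive $d$-th root of unity $\zeta$ (for instance $k = \mbb C$) and set
\[F = k((x_1))((y_1))((x_2))((y_2))\cdots((x_n))((y_n)),\]
a complete (hence Henselian) field carrying the rank-$2n$ valuation $v\colon F^\times \to \Z^{2n}$ that reads off the successive orders of vanishing, with $\Z^{2n}$ ordered lexicographically. Let $A_j = (x_j,y_j)_\zeta$ be the degree-$d$ symbol algebra over $F$, generated by $u_j,w_j$ with $u_j^d = x_j$, $w_j^d = y_j$, $u_jw_j = \zeta\, w_ju_j$. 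For an admissible tuple $(i_1,\ldots,i_n)$ the algebra $A_1^{i_1}\otimes\cdots\otimes A_n^{i_n}$ is Brauer-equivalent to $\bigotimes_j (x_j^{i_j},y_j)_\zeta$ and has degree $d^n$, so it suffices to show that $D := \bigotimes_{j=1}^n (x_j^{i_j},y_j)_\zeta$ is a division algebra (of degree $d^n$); granting this, $D$ is the degree-$d^n$ representative of that Brauer class and the two coincide.

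Next I would extend $v$ to $D$ (possible since $F$ is Henselian and $\mo{char} k = 0$) and observe that each factor $(x_j^{i_j},y_j)_\zeta$ is totally ramified: its value group modulo $\Gamma_F$ is generated by $v(u_j)$ and $v(w_j)$, which project onto the standard generators of the $(2j{-}1)$-st and $2j$-th copies of $\tfrac1d\Z/\Z$ (here $\gcd(i_j,d)=1$ is used, so that $\tfrac{i_j}{d}$ still generates $\tfrac1d\Z/\Z$). Hence, if $D$ is division it must be totally ramified with $\Gamma_D/\Gamma_F \cong (\tfrac1d\Z/\Z)^{2n}$ of order $d^{2n} = (\deg D)^2$, and the commutator pairing on $\Gamma_D/\Gamma_F$ induced by $\zeta$ is block diagonal with $n$ blocks $\left(\begin{smallmatrix} 0 & i_j/d \\ -i_j/d & 0\end{smallmatrix}\right)$. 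By the structure theory of totally ramified division algebras over Henselian valued fields (Amitsur--Tignol on symplectic modules, and Tignol--Wadsworth; see also Draxl's book), a tensor product of symbol algebras supported on disjoint pairs of uniformizers is a division algebra precisely when this alternating pairing is nondegenerate. Each block here has "determinant" $(i_j/d)^2 \neq 0$ in $\tfrac1d\Z/\Z$ because $\gcd(i_j,d)=1$, so the pairing is nondegenerate and $D$ is division, uniformly in $(i_1,\ldots,i_n)$, which proves the lemma.

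To keep things self-contained one can instead induct on $n$. Peel off the last variable by writing $F = F_1((x_n))$ with $F_1 = k((x_1))\cdots((y_{n-1}))((y_n))$; by induction $B := \bigotimes_{j<n}(x_j^{i_j},y_j)_\zeta$ is division over $k((x_1))\cdots((y_{n-1}))$, hence remains division and inertial after the further completions producing $F$. The last factor $(x_n^{i_n},y_n)_\zeta$ is a symbol with uniformizer $x_n$ and unit slot $y_n\in F_1^\times$; it is split by the totally ramified extension $F(\sqrt[d]{x_n})/F$, over which the inertial algebra $B$ stays division, giving $\mo{ind}(D)\mid d\cdot\mo{ind}(B)$, while the reverse divisibility $d^{\,n} = d\cdot d^{\,n-1}\mid \mo{ind}(D)$ is extracted from the tame (second) residue of $[D]$, which is the character of $F_1(\sqrt[d]{y_n})/F_1$ twisted by $[B]$ and has maximal order. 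That last point — the lower bound on $\mo{ind}(D)$, i.e. that the tensor product does not collapse — is the only substantive step; the rest is bookkeeping with Henselian valuations, and it is exactly the place where the nondegeneracy coming from $\gcd(i_j,d)=1$ is indispensable.
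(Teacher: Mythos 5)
The paper's own ``proof'' of this lemma is a one-line citation to Karpenko (\cite{Kar:CSF}, Proposition III.1), so there is no argument in the paper to compare yours against. What you supply is a genuine construction, and it is the classical one: pass to an iterated Laurent-series field over a ground field containing $\mu_d$, take the $A_j$ to be symbol algebras supported on disjoint pairs of uniformizers, and read off the index of $\bigotimes_j (x_j^{i_j},y_j)_\zeta$ from the theory of tame, totally ramified division algebras over a Henselian field. This is correct and does prove the lemma; and the place $\gcd(i_j,d)=1$ enters --- keeping each $v(u_j)$ a generator of $\tfrac1d\Z/\Z$, so that $\Gamma_D/\Gamma_F$ has the full order $d^{2n}$ and the induced alternating pairing stays nondegenerate --- is exactly the right observation. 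Whether Karpenko's own argument runs this way cannot be checked from the paper, but your route is self-contained and standard.

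A few points to tighten. (i) ``Extend $v$ to $D$'' is mildly circular as phrased: the classical extension theorem over a Henselian base extends to a \emph{division} algebra, and $D$ being division is the conclusion. Either argue with the underlying division algebra $D'$ of $D$ (show $|\Gamma_{D'}/\Gamma_F|\geq d^{2n}$, hence $\mo{ind} D\geq d^n$), or invoke the Tignol--Wadsworth gauge formalism, which applies to arbitrary tame central simple algebras. (ii) The displayed $2\times 2$ blocks are not literally the matrix of the pairing in the natural basis: in the basis $v(u_j),v(w_j)$ each block is $\left(\begin{smallmatrix}0&1/d\\ -1/d&0\end{smallmatrix}\right)$ independent of $i_j$; the dependence on $i_j$ is in which sublattice of $(\tfrac1d\Z/\Z)^{2n}$ these elements generate, not in the pairing. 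The conclusion (nondegeneracy when $\gcd(i_j,d)=1$) is still right, so this is cosmetic. (iii) In the inductive variant you peel off $x_n$ from a tower that ends in $y_n$, i.e.\ $x_n$ and $y_n$ are swapped relative to the $F$ you defined, and the step combining an inertial $B$ with the totally ramified symbol in $x_n,y_n$ is doing real work (Morandi, or Jacob--Wadsworth) and deserves an explicit citation. None of these are fatal; the construction succeeds.
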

\begin{proof}
See \cite{Kar:CSF}, proposition III.1.
\end{proof}

\begin{prop} \label{prop_1}
There exists a field $F$ and central simple $F$-algebras $A_1, \ldots,
A_p$ such that $\mo{ind}(A_1 \otimes \cdots \otimes A_p) = p^2$, and every
field extension $E/F$ which splits each algebras $A_i$ has $p^p |
[E:F]$.
\end{prop}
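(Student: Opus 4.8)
The plan is to build the field $F$ and algebras $A_1, \ldots, A_p$ using Lemma \ref{example_lem} with $d = p$ and $n = p$, so that any product $A_1^{i_1} \otimes \cdots \otimes A_p^{i_p}$ with all $i_k$ prime to $p$ is a division algebra, and then enlarge the base field suitably so that the tensor product $A_1 \otimes \cdots \otimes A_p$ has index exactly $p^2$ (for instance by passing to a generic splitting field of a high enough power, or by an explicit specialization as in \cite{Kar:CSF}); one must check that this enlargement preserves the nondivisibility property for all relevant twists, which is where the genericity in Lemma \ref{example_lem} is used. With this setup in hand, suppose $E/F$ splits every $A_i$. After replacing $E$ by a subextension we may assume $[E:F]$ is a power of $p$, since the prime-to-$p$ part of $[E:F]$ plays no role in splitting $p$-primary algebras; write $[E:F] = p^t$.

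The next step is the index computation. Since $E$ splits each $A_i$, it splits $A_1 \otimes \cdots \otimes A_p$, so $p^t = [E:F]$ is divisible by $\mo{ind}(A_1 \otimes \cdots \otimes A_p) = p^2$, giving $t \geq 2$ — but this only yields $p^2 \mid [E:F]$, not the claimed $p^p$. To get the sharper bound I would argue at the level of the Severi-Brauer variety $Y = X(A_1 \otimes \cdots \otimes A_p)$ and the twisted Segre image $X_{L/F}(A) \subset Y$ with $L = F^p$ and $A = A_1 \times \cdots \times A_p$, which by Lemma \ref{degree_lemma} has degree $\binom{p(p-1)}{p-1, \ldots, p-1}$ with $p$-adic valuation computed exactly as in the proof of Theorem \ref{main_thm} to be $p-1$. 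The point is that an $E$-point of $X = \mo{tr}_{L/F} X(A)$ gives a closed point of $X_{L/F}(A)$ of residue degree dividing $p^t$, hence (over $\ov F$) a cycle supported on finitely many points whose degree over $F$ is a multiple of $\mo{ind}$-type quantities; I would combine the index-$p^2$ hypothesis with the degree of the twisted Segre variety to force the residue degree to be divisible by $p^p$.

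Concretely, the mechanism I expect to use is this: splitting all $A_i$ over $E$ is equivalent (Lemma \ref{splitting_lem}, specialized to the split case as in the Remark following the twisted Segre lemma) to $X_{L/F}(A)$ having an $E$-point, i.e.\ a zero-cycle. Intersecting $X_{L/F}(A)$ with a codimension-$(p^2-1)$ Severin-Brauer-type linear cycle $Z$ coming from $\mo{ind}(A_1 \otimes \cdots \otimes A_p) = p^2$ (as in \cite{Ar:BS}), and using that the ambient twisted-linear space $Z$ carries no closed points of degree prime to $p^2$ while $X_{L/F}(A)$ inside it still has degree with $v_p = p-1$, one pushes the valuation of any residue degree up to at least $p$. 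The assumption that every relevant twist $A_1^{i_1} \otimes \cdots \otimes A_p^{i_p}$ remains division is what prevents the index of intermediate restrictions from dropping prematurely, so that the degree contributions genuinely add up rather than collapse.

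The hard part will be the last bookkeeping step: translating "$X_{L/F}(A)$ has an $E$-point and lives inside a twisted linear subspace of small degree" into the clean divisibility $p^p \mid [E:F]$. This requires an incidence/projection argument tracking how the $p^2$ from the index and the $p-1$ from the twisted Segre degree combine — essentially a Karpenko-style argument about $p$-adic valuations of degrees of cycles on products of Severi-Brauer varieties, using Lemma \ref{example_lem} to guarantee the relevant restriction algebras stay division so no valuation is lost. I expect the numerology to work out to exactly $(p-1) + 1 = p$ in the exponent after accounting for the $p^2$ available from the index, matching the corollary of Theorem \ref{main_thm} and showing it is sharp.
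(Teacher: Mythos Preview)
Your proposal has a genuine gap: the twisted Segre machinery you invoke produces \emph{upper} bounds on the degree of a common splitting field, not lower bounds. Knowing that $X_{L/F}(A)$ has geometric degree with $p$-adic valuation $p-1$ tells you that a generic linear slice gives a zero-cycle of that valuation, which is exactly how Theorem~\ref{main_thm} manufactures a splitting field of small degree. It says nothing about why every $E$-point must have large residue degree. Your deferred ``bookkeeping step'' is not bookkeeping at all; it is the entire content of the lower bound, and the ingredients you have assembled (degree of the Segre image, existence of a linear cycle of codimension a multiple of $p^2$) simply do not combine to force $p^p \mid [E:F]$. An $E$-point of $X$ need not lie on your chosen linear slice $Z$, and even if it did, the degree of $X \cap Z$ constrains the sum of residue degrees over $\ov F$, not each one individually.

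The paper's proof uses a completely different and much more direct device. One introduces an auxiliary algebra
\[
A' = A_1 \otimes A_2 \otimes A_3^{2} \otimes A_4^{3} \otimes \cdots \otimes A_p^{\,p-1},
\]
takes $F$ to be the function field of the generalized Severi--Brauer variety $X_{p^2}(A)$ (so that $\mo{ind}(A \otimes F) = p^2$ automatically), and then computes $\mo{ind}(A' \otimes F)$ via Blanchet's index reduction formula. The specific choice of exponents in $A'$ is arranged so that for every $i$ the twist $A' \otimes A^i$ still has all but at most two tensor factors with exponent prime to $p$, and Lemma~\ref{example_lem} then forces each term in the index reduction gcd to be at least $p^p$. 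Since any common splitting field of $A_1,\ldots,A_p$ must split $A'$, the divisibility $p^p \mid [E:F]$ follows immediately from $\mo{ind}(A') = p^p$. This is the missing idea in your approach: rather than analyzing cycles on the product of Severi--Brauer varieties directly, one packages the obstruction into a single central simple algebra whose index can be read off by a known formula.

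A smaller issue: your reduction ``replace $E$ by a subextension of $p$-power degree'' is not valid in general; there is no reason a splitting field should contain such a subextension that still splits the $A_i$. The paper avoids this entirely, since $\mo{ind}(A') \mid [E:F]$ holds for any splitting field $E$ without any reduction.
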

\begin{proof}
To begin, choose a field $L$ and central simple algebras $A_1, \ldots,
A_p$ of degree $p$ satisfying the properties described in lemma
\ref{example_lem}. Let $A = A_1 \otimes \cdots \otimes A_p$ and $A' =
A_1 \otimes A_2 \otimes A_3^2 \otimes A_4^3 \otimes \cdots \otimes
A_p^{p-1}$. Let $F$ be the function field of the generalized
Severi-Brauer variety $X_{p^2} A$ of $p^3 = p \cdot p^2$ dimensional
right ideals of $A$. 

The algebra $A \otimes F$ has index $p^2$, but we claim that the algebra
$A' \otimes F$ has index $p^p$. This would prove our claim since if
$E/F$ is any common splitting field of $A_1 \otimes F, \ldots, A_p
\otimes F$, it would also have to be a splitting field of the algebra
$A' \otimes F$ and hence have degree divisible by $p^p$.

We may compute $\mo{ind}(A' \otimes F)$ using the index
reduction formula of Blanchet (\cite{Blanchet}) as phrased in
\cite{MPW}:
\[\mo{ind}(A' \otimes F) = \underset{1 \leq i \leq }{\gcd} \left\{
\frac{p^2}{\gcd\{p^2, i\}} \mo{ind} A' \otimes A^i \right\}\]
and by our construction of the algebras $A$ and $A'$ we may compute:
\[\frac{p^2}{\gcd\{p^2, i\}} \mo{ind} A' \otimes A^i = \left\{
\begin{matrix}
p^2 \cdot p^{p-2} && \text{if $i = p-1$} \\
p^2 \cdot p^{p-1} && \text{if $i \neq p-1$ and $p \not| i$} \\
p \cdot p^p \text{ or } 1 \cdot p^p && \text{if $p | i$}
\end{matrix}
\right.\]
and in particular, $p^p = \mo{ind} A' \otimes F$.
\end{proof}

\begin{prop} \label{prop_2}
Let $p$ be a prime number, and choose positive integers $d, n$ with $d <
n < p$. Then there exists a field $F$ and central simple $F$-algebras
$A_1, \ldots, A_n$ of degree $p$ such that $\mo{ind}(A_1 \otimes \cdots
\otimes A_n) = p^d$ and every field extension $E/F$ which splits each
algebra $A_i$ has $p^n | [E:F]$.
\end{prop}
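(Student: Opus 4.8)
The plan is to follow the template of proposition \ref{prop_1} closely, adapting it to produce the prescribed index $p^d$ rather than $p^2$. First I would invoke lemma \ref{example_lem} to fix a base field $L$ and central simple $L$-algebras $A_1, \ldots, A_n$ of degree $p$ with the property that every tensor product $A_1^{i_1} \otimes \cdots \otimes A_n^{i_n}$ with each $i_k$ prime to $p$ is a division algebra. Set $A = A_1 \otimes \cdots \otimes A_n$, and then let $F$ be the function field of the generalized Severi--Brauer variety $X_{p^{n-d}}(A)$ parametrizing right ideals of the appropriate rank, so that by construction $\mo{ind}(A \otimes F) = p^d$. The goal is then to exhibit a single algebra $A'$, built as a product $A_1^{j_1} \otimes \cdots \otimes A_n^{j_n}$ with all $j_k$ prime to $p$, whose index over $F$ is forced to be $p^n$; since any common splitting field $E/F$ of the $A_i \otimes F$ splits $A' \otimes F$ as well, this forces $p^n \mid [E:F]$.

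The core computation is the application of Blanchet's index reduction formula as stated in \cite{MPW}: for a generic splitting variety of the form $X_{p^{n-d}}(A)$, one has
\[
\mo{ind}(A' \otimes F) = \underset{i}{\gcd} \left\{ \frac{p^n}{\gcd\{p^n, \, p^{n-d} i\}} \cdot \mo{ind}(A' \otimes A^{\otimes i}) \right\},
\]
(with the exact exponent of $p$ in the first factor depending on the chosen rank, which I would pin down when writing the details). The point is that because the $A_i$ are ``independent'' in the strong sense of lemma \ref{example_lem}, the index $\mo{ind}(A' \otimes A^{\otimes i})$ is exactly $p^n$ whenever $A' \otimes A^{\otimes i}$ is again a product of the $A_k$ with all exponents prime to $p$, and drops only when $p \mid i$ (so that the exponents of some factors become divisible by $p$), which is precisely the case where the combinatorial prefactor $p^n / \gcd(\cdots)$ is small. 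I would choose the exponents $j_k$ defining $A'$ — something like $j_k = k$, exactly as in proposition \ref{prop_1}, possibly shifted — so that across all residues $i \bmod p$ the minimum of the products in the $\gcd$ is achieved at $p^n$, never below it.

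The main obstacle I anticipate is the bookkeeping in this last step: verifying that no value of $i$ makes the product in the $\gcd$ smaller than $p^n$. For residues $i$ with $p \nmid i$, the prefactor contributes a positive power of $p$ and the index factor is $p^n$, so the product exceeds $p^n$ automatically; the delicate cases are $i \equiv 0 \pmod p$, where the prefactor can be as small as $1$ and one must check that $\mo{ind}(A' \otimes A^{\otimes i})$ is then large enough to compensate — i.e. that the exponents $j_k + i$ are configured so that dividing out the common factor of $p$ still leaves enough independent quaternion-like factors to keep the index at $p^n$. This is the analogue of the displayed case analysis in proposition \ref{prop_1}, and choosing the $j_k$ and the rank $p^{n-d}$ compatibly so that this balance works for the given $d < n < p$ is where the real content lies; the hypothesis $n < p$ is what guarantees all the relevant exponents stay in the range $1, \ldots, p-1$ and hence prime to $p$.

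Finally, having established $\mo{ind}(A' \otimes F) = p^n$, I would close the argument exactly as in proposition \ref{prop_1}: any field extension $E/F$ splitting each $A_i \otimes F$ splits their tensor products, in particular $A' \otimes F$, and therefore satisfies $\mo{ind}(A' \otimes F) = p^n \mid [E:F]$.
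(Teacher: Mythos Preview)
Your plan is essentially the paper's own argument: invoke lemma \ref{example_lem}, set $A' = A_1 \otimes A_2^2 \otimes \cdots \otimes A_n^n$ (your choice $j_k = k$ is exactly what the paper uses, and $n < p$ is precisely what makes every $j_k$ prime to $p$), pass to the function field of a generalized Severi--Brauer variety of $A$, and run Blanchet's index reduction to see that $A' \otimes F$ stays of index $p^n$.

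Two small corrections. First, the rank is off: over $k(X_m(A))$ one has $\mo{ind}(A) = \gcd(m, \mo{ind}_k A)$, so to force $\mo{ind}(A \otimes F) = p^d$ you must take $F = k(X_{p^d}(A))$, not $X_{p^{n-d}}(A)$. Second, your displayed form of the index reduction formula is not quite the Blanchet/MPW one; for $X_{p^d}(A)$ the relevant prefactor is $p^d/\gcd(p^d,i)$. With those fixes your direct case analysis goes through: when $p \nmid i$ the prefactor is $p^d$ and at most one exponent $k+i$ is divisible by $p$, so the term is at least $p^d \cdot p^{n-1} \ge p^n$; when $p \mid i$ the algebra $A^i$ is split (since $\mo{per}(A) \mid p$) so the term is $p^n$ times a nonnegative power of $p$. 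The paper streamlines this slightly by first observing that it suffices to check the index over $k(X_p(A))$ (where the formula has prefactor $p/\gcd(p,i)$), but your direct computation over $k(X_{p^d}(A))$ works just as well.
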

\begin{proof}
As in the previous lemma, choose a field $L$ and central simple algebras
$A_1, \ldots, A_n$ of degree $p$ satisfying the properties described in
lemma \ref{example_lem}. Let $A = A_1 \otimes \cdots \otimes A_n$ and
$A' = A_1 \otimes A_2^2 \otimes \cdots \otimes A_p^p$. Let $F$ be the
function field of $X_{p^d}(A)$.

The algebra $A \otimes F$ has index $p^d$, but we claim that the algebra
$A' \otimes F$ has index $p^n$. As in the previous lemma, this would
prove our claim. Since index may only decrease upon scalar extensions,
it suffices to show that the algebra $A' \otimes L$ has index a multiple
of $p^n$ for $L$ the function field of $X_p(A)$.

We may compute $\mo{ind}(A' \otimes F)$:
\[\mo{ind}(A' \otimes F) = \underset{1 \leq i \leq }{\gcd} \left\{
\frac{p}{\gcd\{p, i\}} \mo{ind} A' \otimes A^i \right\}\]
and by our construction of the algebras $A$ and $A'$ we may compute:
\[\frac{p}{\gcd\{p, i\}} \mo{ind} A' \otimes A^i = \left\{
\begin{matrix}
p \cdot p^{n-1} \text{ or } p \cdot p^n && \text{if $i \neq p$} \\
p^{n} && \text{if $i = p$}
\end{matrix}
\right.\]
and in particular, $p^n = \mo{ind} A' \otimes L$ as claimed.
\end{proof}

\section{Counting lemmas}

\begin{lem} \label{val_prod}
Suppose $p$ is a prime integer and $1 \leq k < p$. Let $v_p$ denote the
$p$-adic valuation. Then $v_p((kp^n)!) = k v_p(p^n!)$.
\end{lem}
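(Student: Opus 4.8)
The plan is to compute $v_p((kp^n)!)$ directly from Legendre's formula and compare it with $k \, v_p(p^n!)$, using the hypothesis $1 \le k < p$ to control the carries in the sum. Recall Legendre's formula: for any positive integer $N$ and prime $p$,
\[v_p(N!) = \sum_{j \geq 1} \left\lfloor \frac{N}{p^j} \right\rfloor.\]
Applying this with $N = kp^n$ gives $v_p((kp^n)!) = \sum_{j \geq 1} \lfloor kp^n/p^j \rfloor$, and applying it with $N = p^n$ gives $v_p(p^n!) = \sum_{j \geq 1} \lfloor p^n/p^j \rfloor$.

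First I would split the sum for $v_p((kp^n)!)$ at $j = n$. For $1 \leq j \leq n$ we have $p^j \mid p^n$, hence $\lfloor kp^n/p^j \rfloor = k p^{n-j}$ exactly, with no fractional part to truncate; summing these contributes $k \sum_{j=1}^{n} p^{n-j} = k \sum_{i=0}^{n-1} p^i$. For $j > n$, write $j = n + \ell$ with $\ell \geq 1$; then $\lfloor kp^n/p^{n+\ell} \rfloor = \lfloor k/p^\ell \rfloor = 0$ since $0 < k < p \leq p^\ell$. So the tail vanishes entirely, and we get $v_p((kp^n)!) = k \sum_{i=0}^{n-1} p^i = k \cdot \frac{p^n - 1}{p-1}$.

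Next I would run the same computation for $v_p(p^n!)$: for $1 \leq j \leq n$, $\lfloor p^n/p^j \rfloor = p^{n-j}$, and for $j > n$ the term is $\lfloor p^n/p^j \rfloor = 0$ since $p^j > p^n$. Hence $v_p(p^n!) = \sum_{i=0}^{n-1} p^i = \frac{p^n-1}{p-1}$, and therefore $v_p((kp^n)!) = k \, v_p(p^n!)$, as claimed.

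I do not anticipate a serious obstacle here — the argument is a routine manipulation of Legendre's formula. The only point that requires the hypothesis $k < p$ is the vanishing of the terms with $j > n$: if $k$ were $\geq p$ those terms would contribute and the clean multiplicativity would fail. One could alternatively phrase the whole thing via the base-$p$ digit-sum formula $v_p(N!) = (N - s_p(N))/(p-1)$, observing that the base-$p$ expansion of $kp^n$ is simply the digit $k$ (which is a single digit precisely because $k < p$) followed by $n$ zeros, so $s_p(kp^n) = k = k\, s_p(p^n)$; but the Legendre-formula approach above is the most transparent and I would present that.
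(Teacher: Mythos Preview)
Your proof is correct. It differs from the paper's argument in method, though both are short and elementary. The paper does not invoke Legendre's formula; instead it factors $(kp^n)!$ into $k$ blocks of $p^n$ consecutive integers,
\[(kp^n)! = \prod_{i=0}^{k-1}\prod_{j=1}^{p^n}(ip^n + j),\]
and observes that $v_p(ip^n + j) = v_p(j)$ for $1 \le j \le p^n$ and $0 \le i \le k-1$ (the case $j = p^n$ uses $i+1 \le k < p$), so each block contributes exactly $\sum_{j=1}^{p^n} v_p(j) = v_p(p^n!)$. Your Legendre-formula computation is arguably the more mechanical route and has the side benefit of also yielding the closed form $v_p(p^n!) = (p^n-1)/(p-1)$, which the paper proves separately by induction in the next lemma. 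The paper's block decomposition, on the other hand, makes the role of the hypothesis $k<p$ slightly more visible at a single step (the valuation of the top element of each block) rather than spread across the tail terms of a sum.
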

\begin{proof}
We may write $(kp^n)! = \prod_{i = 0}^{k-1} \prod_{j = 1}^{p^n} (ip^n +
j)$. Noting that $v_p(ip^n + j) = v_p(j)$
and taking valuations of both sides gives:
\[v_p((kp^n)!) = \sum_{i = 0}^{k-1} \sum_{j=1}^{p^n} v_p(ip^p + j) = 
\sum_{i = 0}^{k-1} \sum_{j=1}^{p^n} v_p(j) = k v_p(p^n!)\]
\end{proof}

\begin{lem} \label{val_power}
Suppose $p$ is a prime integer. Then \[v_p(p^n!) = \frac{p^n - 1}{p -
1}.\]
\end{lem}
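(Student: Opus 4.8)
The plan is to compute $v_p(p^n!)$ by the standard Legendre-type counting of powers of $p$ dividing a factorial, but organized recursively so as to exploit Lemma \ref{val_prod}. The cleanest route is induction on $n$: the base case $n = 0$ (or $n=1$) is immediate, and for the inductive step I would write $p^{n} = p \cdot p^{n-1}$ and apply Lemma \ref{val_prod} with $k = p$ — wait, that lemma requires $k < p$, so instead I would use the grouping directly.

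Here is the argument I would carry out. Write $p^n! = \prod_{j=1}^{p^n} j$ and split the product according to divisibility by $p$: among $1, \ldots, p^n$ exactly $p^{n-1}$ are divisible by $p$, and these are precisely $p, 2p, \ldots, p^{n-1}\cdot p$. Taking $p$-adic valuations,
\[
v_p(p^n!) = \sum_{j=1}^{p^n} v_p(j) = \sum_{i=1}^{p^{n-1}} v_p(ip) = p^{n-1} + \sum_{i=1}^{p^{n-1}} v_p(i) = p^{n-1} + v_p(p^{n-1}!).
\]
This gives the recursion $v_p(p^n!) = p^{n-1} + v_p(p^{n-1}!)$ with $v_p(p^0!) = v_p(1) = 0$. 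Unwinding, $v_p(p^n!) = p^{n-1} + p^{n-2} + \cdots + p + 1 = \frac{p^n - 1}{p-1}$, as claimed.

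There is essentially no obstacle here: the only point requiring a moment's care is the bookkeeping that exactly $p^{n-1}$ integers in $\{1,\ldots,p^n\}$ are multiples of $p$ and that factoring out that single power of $p$ from each reduces the remaining sum to $v_p(p^{n-1}!)$; once that is set up the geometric series does the rest. (Alternatively one can quote Legendre's formula $v_p(N!) = \sum_{i\ge 1}\lfloor N/p^i\rfloor$ directly with $N = p^n$, whereupon the sum telescopes to $p^{n-1} + p^{n-2} + \cdots + 1$; I would include whichever phrasing meshes better with the surrounding exposition, but the recursive version above has the virtue of being self-contained.)
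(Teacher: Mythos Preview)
Your argument is correct but takes a different route from the paper. You split $\{1,\ldots,p^n\}$ according to divisibility by $p$ and obtain the additive recursion $v_p(p^n!) = p^{n-1} + v_p(p^{n-1}!)$, which telescopes to the geometric series directly; this is self-contained and makes no use of Lemma~\ref{val_prod}. The paper instead breaks $p^{n+1}!$ into the initial block $(p^{n+1}-p^n)!$ and the final $p^n$ consecutive factors, applies Lemma~\ref{val_prod} with $k = p-1$ to the first block, and after some bookkeeping arrives at the multiplicative recursion $v_p(p^{n+1}!) = p\,v_p(p^n!) + 1$. Both recursions unwind to the same closed form. Your version is shorter and in fact renders Lemma~\ref{val_prod} unnecessary for the paper, while the paper's version keeps the block-grouping theme consistent with the surrounding lemmas; either is a perfectly good proof.
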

\begin{proof}
We may show this by induction, the case of $n = 1$ being left to the
reader. For the induction case, suppose that the result holds for $n$.
In this case, we write $p^{n+1}! = (p^{n+1} - p^n)!
\prod_{i=1}^{p^n}(p^{n+1} - p^n + i)$. Noting that 
\[v_p(p^{n+1} - p^n + i) = \left\{
\begin{matrix}
v_p(i) && \text{if $i \neq p^n$} \\
n+1 && \text{if $i = p^n$}
\end{matrix} \right.\]
and by lemma \ref{val_prod} that 
\[v_p((p^{n+1} - p^n)!) = v_p\big(((p-1)p^n)!\big) = (p-1)v_p(p^n!),\] 
we have
\begin{multline*}
v_p(p^{n+1}!) = v_p((p^{n+1} - p^n)!) + \sum_{i=1}^{p^n} v_p(p^{n+1} -
p^n + i) \\
= (p-1)v_p(p^n!) + n + 1 + \sum_{i=1}^{p^n - 1} v_p(i) = (p-1) v_p(p^n!)
+ v_p(p^n) + 1 + v_p((p^n - 1)!) \\
= (p-1)v_p(p^n!) + v_p(p^n!) + 1 
= p v_p(p^n!) + 1 = \frac{p^{n+1} - p}{p-1} + 1 = \frac{p^{n+1} - 1}{p -
1}
\end{multline*}
\end{proof}

\begin{lem} \label{val_misc}
Let $p$ be a prime integer. Then:
\[v_p\big((p^k(p^n - 1))!\big) = v_p(p^{k+n}!) - v_p(p^k!) - n\]
\end{lem}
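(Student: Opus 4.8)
The plan is to follow exactly the pattern of the inductive step in the proof of Lemma~\ref{val_power}. Since $p^k(p^n-1) = p^{k+n}-p^k$, I would start from the factorization
\[
p^{k+n}! \;=\; (p^{k+n}-p^k)!\cdot\prod_{i=1}^{p^k}\bigl(p^{k+n}-p^k+i\bigr),
\]
take $p$-adic valuations of both sides, and solve for $v_p\bigl((p^{k+n}-p^k)!\bigr) = v_p\bigl((p^k(p^n-1))!\bigr)$.

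The only point that needs attention — and it is the same bookkeeping already carried out in Lemma~\ref{val_power} — is evaluating the tail product. For $1\le i\le p^k$ I would observe that
\[
v_p\bigl(p^{k+n}-p^k+i\bigr) = \begin{cases} v_p(i), & i\neq p^k,\\ k+n, & i=p^k,\end{cases}
\]
the first case because $i<p^k$ forces $v_p(i)\le k-1$ while $p^k\mid p^{k+n}-p^k$, so after dividing by $p^{v_p(i)}$ what remains is a unit modulo $p$; the second case because the sum collapses to $p^{k+n}$. Summing then gives
\[
\sum_{i=1}^{p^k} v_p\bigl(p^{k+n}-p^k+i\bigr) = (k+n) + \sum_{i=1}^{p^k-1} v_p(i) = (k+n) + v_p\bigl((p^k-1)!\bigr) = n + v_p(p^k!),
\]
the last equality using $v_p(p^k!) = k + v_p\bigl((p^k-1)!\bigr)$. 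Plugging this into the valuation of the displayed factorization and rearranging yields $v_p\bigl((p^k(p^n-1))!\bigr) = v_p(p^{k+n}!) - v_p(p^k!) - n$, as wanted.

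I do not anticipate any genuine obstacle: the whole argument collapses to the elementary valuation count above, which directly parallels Lemma~\ref{val_power} and needs no new ideas. (Should one wish to bypass even that, Legendre's formula $v_p(N!) = (N - s_p(N))/(p-1)$ applied to $N = p^k(p^n-1)$ — whose base-$p$ expansion is $n$ copies of the digit $p-1$ followed by $k$ zeros, so $s_p(N) = n(p-1)$ — gives the identity immediately, but the factorization approach matches the style of the surrounding counting lemmas.)
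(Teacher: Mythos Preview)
Your proposal is correct and matches the paper's proof essentially line for line: the paper factors $p^{k+n}!$ as $(p^{k+n}-p^k)!$ times the tail product $\prod_{i=1}^{p^k}(p^{k+n}-p^k+i)$, evaluates the tail valuation as $v_p(p^k!)+n$ via the same case split on $i\ne p^k$ versus $i=p^k$, and rearranges. Your aside on Legendre's formula is an alternative the paper does not use, but your main argument is the paper's own.
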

\begin{proof}
We have:
\begin{multline*}
v_p(p^{n+k}!) = v_p\big((p^{n+k} - p^k)!\big) +
\left(\underset{i=1}{\overset{p^k-1}\sum} v_p(p^{n+k} - p^k + i)\right)
+ v_p(p^{n+k}) \\
= v_p\big((p^{n+k} - p^k)!\big) +
\left(\underset{i=1}{\overset{p^k-1}\sum} v_p(i)\right) + n+k 
= v_p\big((p^{n+k} - p^k)!\big) + v_p(p^k!) + n
\end{multline*}
\end{proof}

\bibliographystyle{alpha}
\bibliography{citations}

\end{document}